\theoremstyle{plain}
\newtheorem{satz}{Satz}[section]
\newtheorem{prop}[satz]{Proposition}
\newtheorem{theo}[satz]{Theorem}
\newtheorem{cor}[satz]{Corollary}
\theoremstyle{definition}
\newtheorem{ex}[satz]{Example}
\newtheorem{definition}[satz]{Definition}
\theoremstyle{remark}
\newtheorem{rem}[satz]{Remark}
\newcommand{\ind}{\mathbbm{1}}
\newcommand{\R}{\mathbb R}
\newcommand{\E}{\mathbb E}
\newcommand{\N}{\mathbb N}
\renewcommand{\P}{\mathbb P}
\DeclareMathOperator{\Var}{Var}
\DeclareMathOperator{\sign}{sign}
\newcommand{\todistr}{\overset{d}{\underset{n\to\infty}\longrightarrow}}
\newcommand{\toweak}{\overset{w}{\underset{n\to\infty}\longrightarrow}}
\newcommand{\ton}{\overset{}{\underset{n\to\infty}\longrightarrow}}
\newcommand{\eps}{\varepsilon}
\newcommand{\eqfdd}{\overset{f.d.d.}{\underset{}=}}
\newcommand{\eqdistr}{\overset{f.d.d.}{\underset{}=}}
\newcommand{\eqd}{\overset{d}{\underset{}=}}
\newcommand{\dd}{{\rm d}}
\newcommand{\eee}{{\rm e}}
\begin{document}
\title[Max-stable processes and stationary systems of L\'evy particles]{Max-stable processes and stationary systems of L\'evy particles}
\author{Sebastian Engelke}
\address{Sebastian Engelke, Ecole Polytechnique F\'ed\'erale de Lausanne,
EPFL-FSB-MATHAA-STAT, Station 8, 1015 Lausanne, Switzerland\newline
Universit\'{e} de Lausanne, Quartier UNIL-Dorigny,
B\^{a}timent Extranef, 1015 Lausanne, Switzerland
}
\email{sebastian.engelke@epfl.ch}
\author{Zakhar Kabluchko}
\address{Zakhar Kabluchko, Institute of Mathematical Statistics, University of M\"unster, Orl\'{e}ans-Ring 10, 48149 M\"unster, Germany}
\email{zakhar.kabluchko@uni-muenster.de}

\keywords{Max-stable random process, L\'evy process, de Haan representation, extreme value theory, Poisson point process, exponential intensity, Kuznetsov measure}
\subjclass[2010]{Primary, 60G70 ; secondary, 60G51, 60G10, 	60G55}

\begin{abstract}
We study stationary max-stable processes $\{\eta(t)\colon t\in\mathbb R\}$ admitting a representation of the form $\eta(t)=\max_{i\in\mathbb N}(U_i+ Y_i(t))$, where $\sum_{i=1}^{\infty} \delta_{U_i}$ is a Poisson point process on $\mathbb R$ with intensity ${\rm e}^{-u} {\rm d} u$, and $Y_1,Y_2,\ldots$ are i.i.d.\ copies of a process $\{Y(t)\colon t\in\mathbb R\}$ obtained by running a L\'evy process for positive $t$ and a dual L\'evy process for negative $t$. We give a general construction of such L\'evy--Brown--Resnick processes, where the restrictions of $Y$ to the positive and negative half-axes are L\'evy processes with random birth and killing times. We show that these max-stable processes appear as limits of suitably normalized pointwise maxima of the form $M_n(t)=\max_{i=1,\ldots,n} \xi_i(s_n+t)$, where $\xi_1,\xi_2,\ldots$ are i.i.d.\ L\'evy processes and $s_n$ is a sequence such that $s_n\sim c \log n$ with $c>0$. Also, we consider maxima of the form $\max_{i=1,\ldots,n} Z_i(t/\log n)$, where $Z_1,Z_2,\ldots$ are i.i.d.\ Ornstein--Uhlenbeck processes driven by an $\alpha$-stable noise with skewness parameter $\beta=-1$. After a linear normalization, we again obtain limiting max-stable processes of the above form. This gives a generalization of the results of Brown and Resnick [Extreme values of independent stochastic processes, \textit{J.\ Appl.\ Probab.}, 14 (1977), pp.\ 732--739] to the totally skewed $\alpha$-stable case.
\end{abstract}
\maketitle

\section{Statement of results}
\subsection{Introduction}\label{subsec:intro}
Max-stable stochastic processes form a widely used class of models for extremal phenomena in space and time. The one-dimensional margins of max-stable processes belong to the family of extreme-value distributions.  For the purposes of the present paper, it will be convenient to choose the marginal distribution functions to be of the standard Gumbel form $\exp(-\eee^{-x})$, $x\in\R$. Our processes will be defined on $T=\R$. With these conventions, a stochastic process $\{\eta(t)\colon t\in \R\}$ is called \textit{max-stable} if for every $n\in\N$,
\begin{equation}\label{eq:max_stable_def}
\left\{\max_{i=1,\ldots,n} \eta_i(t) - \log n \colon t\in \R\right\} \eqfdd  \{\eta(t)\colon t\in \R\},
\end{equation}
where $\eta_1,\ldots,\eta_n$ are i.i.d.\ copies of the process $\eta$.
By a result of~\citet{deh1984}, any max-stable process $\eta$ admits a \textit{spectral representation} of the form
\begin{equation}\label{de_haan_rep}
\{\eta(t)\colon t\in \R\} \eqfdd \left\{\max_{i\in\N}\, (U_i + Y_i(t))\colon t\in\R\right\},
\end{equation}
where
\begin{itemize}
\item 
$\sum_{i=1}^{\infty} \delta_{U_i}$ is a Poisson point process (PPP) on $\R$ with intensity $\eee^{-u}\dd u$;
\item 
$Y_1,Y_2,\ldots$ are i.i.d.\ copies of a stochastic process $Y=\{Y(t)\colon t\in\R\}$ which takes values in $\R \cup\{-\infty\}$ and satisfies the condition $\E \eee^{Y(t)} = 1$;
\item 
$\sum_{i=1}^{\infty} \delta_{U_i}$ is independent of $\{Y_i\colon i\in\N\}$.
\end{itemize}
As usual, $\delta_u$ denotes the unit Dirac measure at $u$.

In the special case when $Y(0)=0$ a.s.\ it is convenient to imagine an infinite system of particles on $\R\cup\{-\infty\}$ that start at time $t=0$ at the spatial positions $U_i$ and move independently according to the law of the process $Y$. Then, $\eta(t)$ is just the position of the right-most particle at time $t$. In the case when $Y(0)$ is not $0$, the starting positions of the particles are at $U_i+Y_i(0)$. If, for some $t\in\R$, $Y_i(t)$ becomes $-\infty$, the particle $i$ is considered as ``killed'' at time $t$.

In this paper, we will be interested in \textit{stationary} max-stable processes. One of the interesting features of the de Haan representation~\eqref{de_haan_rep} is that the process $\eta$ can be stationary even though the process $Y$ is not. The first example of this type was constructed by~\citet{bro1977}.
They considered a process of the form
\begin{align}\label{BR}
  \eta_{\text{BR}}(t) = \max_{i\in\N}\, (U_i + B_i(t) - |t|/2),
\end{align}
where $B_1,B_2,\ldots$ are independent copies of a two-sided standard Brownian motion $\{B(t)\colon  t\in\R\}$. \citet{bro1977} observed that the process $\eta_{\text{BR}}$ is stationary and max-stable. Also, they showed that $\eta_{\text{BR}}$ appears as the large $n$ limit for pointwise maxima of
\begin{enumerate}
 \item[(a)] $n$ independent Brownian motions and
 \item[(b)] $n$ independent Ornstein--Uhlenbeck processes,
\end{enumerate}
after appropriate normalization which involves spatial rescaling of the processes.  Note that statement (b) explains the stationarity of $\eta_{\text{BR}}$.

Since the Brownian motion $B$ is both a Gaussian process and a L\'evy process, it is natural to ask whether there is a generalization of the Brown--Resnick process $\eta_{\text{BR}}$ in which the spectral functions $Y_i$ are i.i.d.\
\begin{enumerate}
\item [(i)] Gaussian processes or
\item [(ii)] L\'evy processes.
\end{enumerate}

Regarding question~(i), it was shown in~\cite{kab2009} that if $W_1,W_2,\ldots$ are i.i.d.\ copies of a centered Gaussian process $W$ with stationary increments and variance $\sigma^2(t)=\Var W(t)$, then the max-stable process
$$
\eta(t) := \max_{i\in\N}\, (U_i + W_i(t) - \sigma^2(t)/2)
$$
is stationary. This class of max-stable processes has become a common tool in spatial extreme value modeling~\cite{dav2012b, eng2014}.

In this paper, we will be interested in question~(ii). Max-stable processes whose spectral functions are L\'evy processes were first considered by~\citet{sto2008}. Our aim is to describe a two-sided version of Stoev's construction, to generalize the construction by allowing birth and killing of L\'evy processes, and to obtain limit theorems in which Stoev's processes appear in a natural way as limits.

The paper is organized as follows. We start by describing a two-sided version of Stoev's construction in Section~\ref{subsec:levy_brown_resnick}. In Section~\ref{subsec:levy_brown_resnick_general} we generalize this construction to L\'evy processes with random birth and killing times. Stationary max-stable processes constructed in this way will be called \textit{L\'evy--Brown--Resnick} processes.  Mixed moving maxima representations of these processes will be constructed in Section~\ref{subsec:MMM} and some of their properties will be studied in Section~\ref{subsec:general_properties}. In Section~\ref{subsec:extremal_index} we compute the extremal index of a L\'evy--Brown--Resnick process in the case when the driving L\'evy process has no positive jumps. In Section~\ref{subsec:extremes_levy} we prove that the processes introduced by~\citet{sto2008} appear as limits of pointwise maxima of i.i.d.\ L\'evy processes, after applying suitable normalization procedures.   Finally, in Sections~\ref{subsec:extremes_levy_stable} and~\ref{subsec:extremes_OU} we generalize the original results of~\citet{bro1977} to the totally skewed $\alpha$-stable case.
The proofs are given in Sections~\ref{sec:proof_stationarity}, \ref{sec:proof_general_properties}, \ref{sec:proof_convergence}.

\begin{rem}
In this paper, we focus on continuous-time processes defined on $\R$. However, our results (except those of Sections~\ref{subsec:extremes_levy_stable} and~\ref{subsec:extremes_OU}) remain valid if we replace continuous time by discrete time and L\'evy processes by random walks.
\end{rem}

\subsection{L\'evy--Brown--Resnick processes}\label{subsec:levy_brown_resnick}
Let $\{L^+(t)\colon t\geq 0\}$ be a L\'evy process satisfying
\begin{equation}\label{eq:psi=0}
\E \eee^{L^+(1)} = 1.
\end{equation}
Stoev~\cite{sto2008} showed that if $L_1^+,L_2^+,\ldots$ are i.i.d.\ copies of $L^+$ and, independently, $\sum_{i=1}^{\infty} \delta_{U_i}$ is a PPP on $\R$ with intensity $\eee^{-u}\dd u$, then the max-stable process
\begin{align}\label{LBR_1side}
  \eta(t) = \max_{i\in\N}\, (U_i + L^+_i(t)), \qquad t\geq 0,
\end{align}
is stationary on $\R_+$. Indeed, the mapping theorem for Poisson point processes implies, together with~\eqref{eq:psi=0}, that for any $t_0\geq 0$, the points $U_i+L^+_i(t_0)$, $i\in\N$, form a PPP with the same intensity $\eee^{-u}\dd u$. By the Markov property of the L\'evy processes $L_i^+$, the time-shifted  process $\{\eta(t_0+t)\colon t\geq 0\}$ has the same law as the original process $\{\eta(t)\colon t\geq 0\}$.

\begin{figure}[t]
  \includegraphics[width=1\textwidth, trim = 0mm 18mm 0mm 18mm]{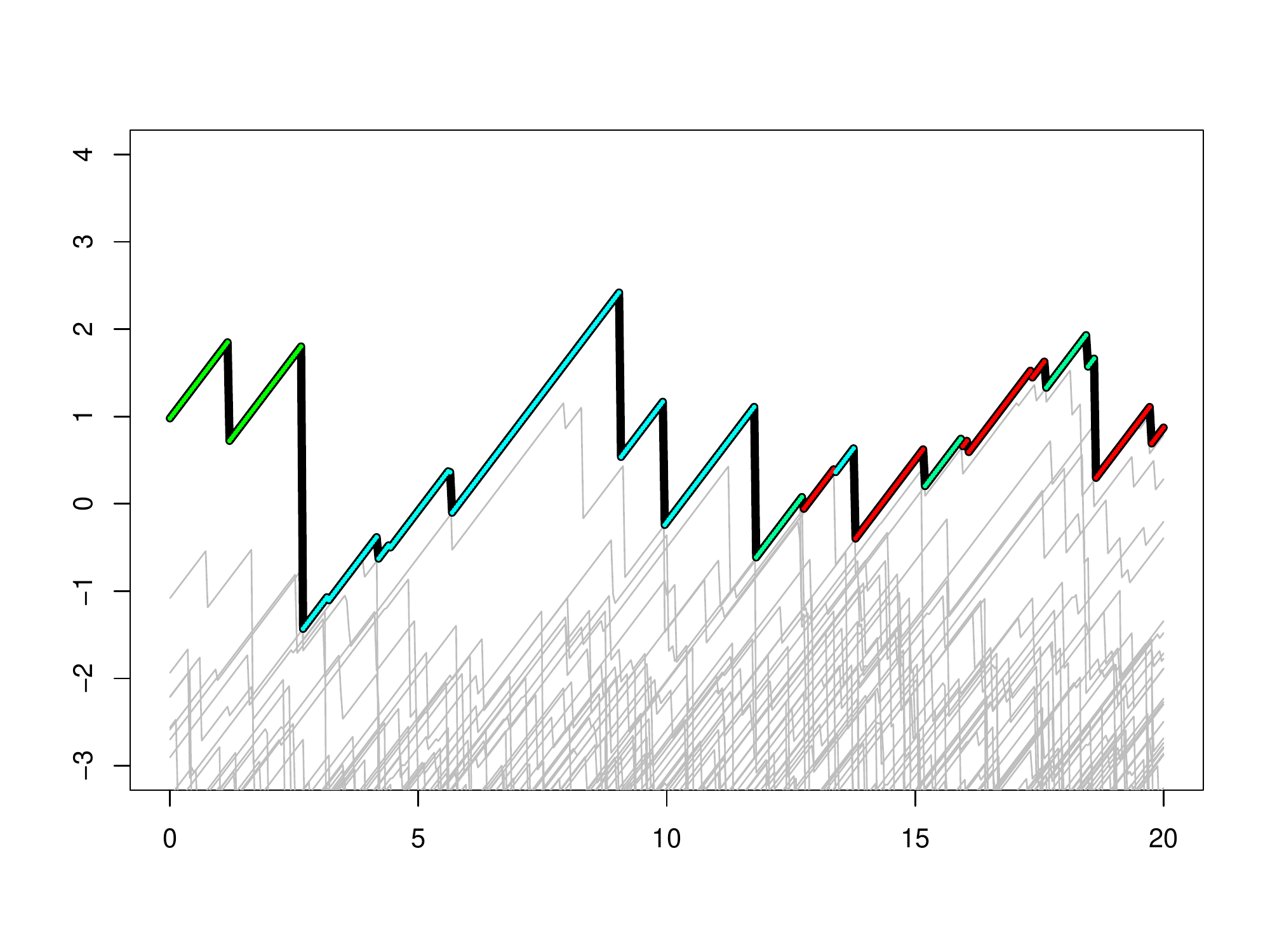}
   \caption{L\'evy--Brown--Resnick process generated by drifted compound Poisson processes with exponential jumps. Different colors indicate different particles contributing to the maximum process.}\label{fig:LBR_Poisson}
\end{figure}

How to obtain a two-sided stationary extension of the process $\eta$? To this end, let us adopt the particle system interpretation of the de Haan representation; see Section~\ref{subsec:intro}.  Take some positive time $T>0$ and look at some particle from the system conditioned to be in spatial position $x$ at time $T$. The conditional intensity of finding this particle in spatial position $y$ at time $T-t$ is $\eee^{-y} \dd y \, q_t^+(y,\dd x)/ (\eee^{-x} \dd x)$,
where $q^+_t(x,\dd y)$ is the probability transition kernel of the process $L^+$ (describing the forward in time motion of   particles).
That is, the probability transition kernel $q_t^{-}(x,\dd y)$ of the L\'evy process $L^-$ (which describes the backward in time motion of particles) is related to $q^+_t(x,\dd y)$  by the duality relation
\begin{equation}\label{eq:q_+_q_-_dual_0}
\eee^{-x} \dd x \cdot q_t^-(x,\dd y)  = \eee^{-y} \dd y \cdot q_t^+(y, \dd x).
\end{equation}
With other words, the process $-L^-$ can be obtained from $L^+$ by exponential tilting (Esscher transform):
\begin{equation}\label{eq:tilting}
\P[-L^-(t)\in B] = \E [\eee^{L^+(t)} \ind_{L^+(t)\in B}],
\end{equation}
for all Borel sets $B\subset \R$.  Note that $L^-$ satisfies $\E \eee^{L^-(t)}=1$, exactly as $L^+$.
Taking independent realizations of $L^+$ and $L^-$, we define the two-sided process
\begin{align}\label{twosided}
    L(t) =
    \begin{cases}
      L^+(t), & \quad t\geq 0,\\
      L^-(-t), & \quad t < 0.
    \end{cases}
\end{align}
\begin{theo}\label{Cor_main}
Let $\sum_{i=1}^{\infty} \delta_{U_i}$ be a PPP on $\R$ with intensity $\eee^{-u}\dd u$ and, independently, let $L_1,L_2,\ldots$ be i.i.d.\ copies of the process $\{L(t)\colon t\in\R\}$. Then, the process
\begin{align}\label{eta_def}
\eta(t) = \max_{i\in\N}\, (U_i +  L_{i}(t)) , \qquad t\in\R,
\end{align}
is max-stable and stationary.
\end{theo}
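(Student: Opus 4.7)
My plan is to verify max-stability and stationarity separately. Max-stability reduces to the marginal normalisation $\E \eee^{L(t)} = 1$ for every $t \in \R$: for $t \ge 0$ this is immediate from \eqref{eq:psi=0} and the Lévy property, and for $t < 0$ it follows by applying the tilting identity \eqref{eq:tilting} with $f(x) = \eee^{-x}$, giving $\E \eee^{L^-(s)} = \E[\eee^{L^+(s)} \eee^{-L^+(s)}] = 1$. Granted this, relation \eqref{eq:max_stable_def} follows because the superposition of $n$ independent copies of the driving PPP is a PPP of intensity $n \eee^{-u}\,\dd u$, which after shifting by $-\log n$ recovers the original intensity $\eee^{-u}\,\dd u$.

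The substance is stationarity. Using the de Haan formula $-\log \P(\eta(t_1) \le x_1, \ldots, \eta(t_k) \le x_k) = \E[\max_{j} \eee^{L(t_j) - x_j}]$, stationarity reduces to showing that $\E[\max_j \eee^{L(t_j + t_0) - x_j}] = \E[\max_j \eee^{L(t_j) - x_j}]$ for every $t_0 \in \R$ and every choice of $t_j, x_j$. Rewriting the left-hand side as $\E[\eee^{L(t_0)} \max_j \eee^{(\theta_{t_0} L)(t_j) - x_j}]$ with $(\theta_{t_0} L)(t) := L(t + t_0) - L(t_0)$, and viewing $\P^{t_0} := \eee^{L(t_0)} \P$ as a probability (legitimate because $\E \eee^{L(t_0)} = 1$), the claim reduces to the distributional identity $\theta_{t_0} L \eqd L$ under $\P^{t_0}$. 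By symmetry I may assume $t_0 > 0$.

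I verify the identity piecewise, decomposing $\theta_{t_0} L$ into its forward part, its backward part on $[0, t_0]$, and its backward part on $(t_0, \infty)$. The forward part $(L^+(t_0 + t) - L^+(t_0))_{t \ge 0}$ is a Lévy process with law $\P_{L^+}$ by the Markov property and is independent of $\sigma(L^+(s) : s \le t_0)$; since the tilting density $\eee^{L^+(t_0)}$ is $\sigma(L^+(s) : s \le t_0)$-measurable, this part has the same law under $\P^{t_0}$ as under $\P$. The backward part on $(t_0, \infty)$, namely $L^-(s - t_0) - L^+(t_0)$ for $s > t_0$, has increment $L^-(s - t_0)$ past time $-t_0$ that is independent of $L^+$ and hence of the tilt, producing a copy of $L^-$.

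The main obstacle is the backward part on $[0, t_0]$, namely $L^+(t_0 - s) - L^+(t_0)$. Under $\P^{t_0}$ the Esscher transform makes $L^+|_{[0, t_0]}$ a Lévy process whose one-dimensional marginal at time $t_0$ coincides with that of $-L^-(t_0)$ by \eqref{eq:tilting}; since a Lévy process is determined by its marginal at any single positive time, this Esscher-tilted $L^+$ equals $-L^-$ in distribution on $[0, t_0]$. Next, the standard time-reversal identity $(L^+(t_0) - L^+(t_0 - s))_{0 \le s \le t_0} \eqfdd L^+|_{[0, t_0]}$, valid for any Lévy process by independence and stationarity of increments, yields, under $\P^{t_0}$, that $(L^+(t_0) - L^+(t_0 - s))_{0 \le s \le t_0} \eqd -L^-|_{[0, t_0]}$; negating gives $L^+(t_0 - s) - L^+(t_0) \eqd L^-(s)$ on $[0, t_0]$ under $\P^{t_0}$. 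Finally, independence of the three pieces under $\P^{t_0}$ --- preserved by the $\sigma(L^+(s) : s \le t_0)$-measurable tilt and by the independence of $L^-$ from $L^+$ --- assembles them into the required joint law $\P_L$, completing the proof.
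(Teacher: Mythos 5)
Your proof is correct, but it takes a genuinely different route from the paper's. The paper obtains Theorem~\ref{Cor_main} as a special case of Theorem~\ref{theo:inv_two_sided_PPP}, which rests on Theorem~\ref{thm1}: one writes down the finite-dimensional intensity of the path-valued Poisson process $\sum_i \delta_{U_i+L_i}$ and repeatedly applies the semigroup duality \eqref{duality} (i.e.\ \eqref{eq:q_+_q_-_dual_0}) to replace $\mu(\dd x)\,P_t^-(x,\dd y)$ by $P_t^+(y,\dd x)\,\mu(\dd y)$, until the intensity visibly depends only on time differences. You instead work with the exponent-measure formula $-\log\P[\eta(t_j)\le x_j,\ j\le k]=\E\max_j \eee^{L(t_j)-x_j}$, absorb the time shift into the exponential change of measure $\P^{t_0}=\eee^{L(t_0)}\P$, and reduce stationarity to the identity $L(\cdot+t_0)-L(t_0)\eqd L$ under $\P^{t_0}$, which you verify piecewise via the Markov property, the Esscher transform, and time reversal of L\'evy processes. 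The two arguments encode the same algebra --- your tilting step is exactly the duality relation \eqref{eq:q_+_q_-_dual_0}, and your time-reversal step performs the same conversion of ``$P^-$ run backwards'' into ``$P^+$ run forwards'' that the paper carries out on the intensity measure --- but yours is more hands-on and leans on specifically L\'evy facts (a L\'evy law is determined by a single marginal, reversed increments of a L\'evy process reproduce the process), whereas the paper's duality computation generalizes verbatim to Markov processes in duality with respect to an invariant measure and thereby also covers the killed processes of Theorem~\ref{theo:inv_two_sided_PPP}. Two steps you state tersely do hold: the reduction to $t_0>0$ is legitimate because \eqref{eq:q_+_q_-_dual_0} is symmetric in $L^+$ and $L^-$, so the time-reversed process has the same structure with the roles of $L^{\pm}$ swapped; and gluing the tilted, reversed piece on $[0,t_0]$ to the untouched copy of $L^-$ beyond $t_0$ reproduces the law of $L^-$ on all of $[0,\infty)$ precisely because $L^-$ has stationary independent increments. (A cosmetic remark: max-stability itself follows from the superposition property of Poisson processes alone; the normalisation $\E\,\eee^{L(t)}=1$ is what guarantees a.s.\ finiteness and standard Gumbel margins.)
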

Theorem~\ref{Cor_main} is a particular case of a more general Theorem~\ref{thm1}, below.
\begin{ex}
\begin{itemize}
\item
  Let $\{B(t)\colon t\in\R\}$ be a two-sided standard Brownian motion. The one-sided process $L^+(t)=B(t)-t/2$, $t\geq 0$, satisfies~\eqref{eq:psi=0}. It follows from~\eqref{eq:tilting} that the dual process $L^-$ has the same law as $L^+$. Hence, the two-sided process $L$ can be identified with $B(t)-|t|/2$, and we recover the original Brown--Resnick process~\eqref{BR}.
\item
  Let $\{N^+(t)\colon  t\geq 0\}$ be a Poisson process with intensity $\lambda>0$. Then, the process $L^+(t) = N^+(t)-(\eee-1)\lambda t$ satisfies~\eqref{eq:psi=0}. The dual process is given by $L^-(t) = (\eee-1)\lambda t - N^-(t)$, where $\{N^-(t)\colon t\geq 0\}$ is a Poisson process with intensity $\eee \lambda$. The two-sided process $L$ is then
  $$
L(t) =
    \begin{cases}
      N^+(t)-(\eee-1)\lambda t, & \quad t\geq 0,\\
      - N^-(-t) - (\eee-1)\lambda t , & \quad t < 0.
    \end{cases}
$$
\item
  Generalizing the above examples, one can show that if the process $L^+$ has L\'evy triple $(\nu_+, \sigma^2_+, d_+)$, then the process $L^-$ has the L\'evy triple $(\nu_-, \sigma^2_-, d_-)$, where the variance $\sigma^2_+=\sigma^2_-$ is the same in both cases, and the L\'evy measures $\nu_+$ and $\nu_-$ are related by
  $$
  \nu_-(-\dd x) = \eee^{x} \nu_+(\dd x).
  $$
  The proof follows from~\eqref{eq:tilting} and the well-known behavior of the L\'evy triple under the Esscher transform; see~\cite[Theorem~3.9]{kyprianou_book}.  Note that the drifts $d_+$ and $d_-$ are uniquely determined by the remaining parameters and the relation~\eqref{eq:psi=0}.
\end{itemize}
\end{ex}

Figure~\eqref{fig:LBR_Poisson} shows a max-stable process $\eta$ generated by a compound Poisson process with exponential jump sizes, and the complete set of trajectories $\{U_i+L_i(\cdot)\colon i\in\N\}$.

\begin{rem}
Two-sided processes obtained by running a Markov process forward in time and the dual Markov process backward in time, as in~\eqref{twosided}, are well-known in probabilistic potential theory; see~\citet{mitro}.
\end{rem}

\begin{figure}[t]
  \includegraphics[width=1\textwidth, trim = 0mm 18mm 0mm 18mm]{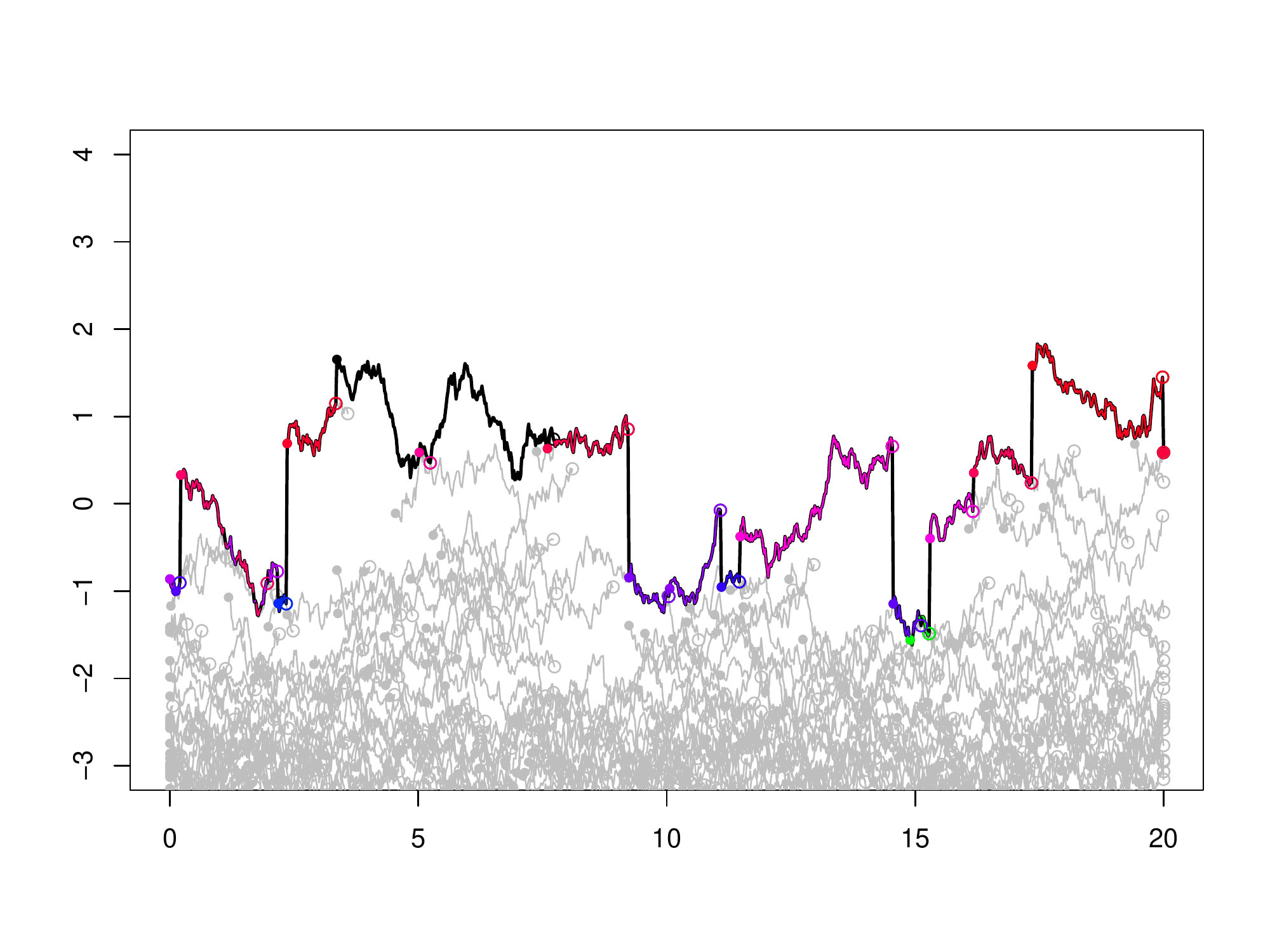}
   \caption{L\'evy--Brown--Resnick process generated by drifted Brownian motions; see Example~\ref{ex:LBR_BM}. Both birth and killing times are finite. Different colors indicate different particles contributing to the
maximum process.}\label{fig:LBR_BM}
\end{figure}

\subsection{Generalization to random creation and killing times}\label{subsec:levy_brown_resnick_general}
In this section we generalize the construction of L\'evy--Brown--Resnick processes to the case when~\eqref{eq:psi=0} is not satisfied.
We start with a L\'evy process $\{\xi(t)\colon t\geq 0\}$ for which
\begin{align}\label{psi}
  \psi(1) :=  \log \E \eee^{\xi(1)} < \infty.
\end{align}
We do \textit{not} require that $\psi(1)=0$. Additionally, we need two parameters $\theta_+\geq 0$ and $\theta_-\geq 0$ satisfying the relation
\begin{align}\label{rates}
    \psi(1) = \theta_- - \theta_+.
\end{align}

We will construct a stationary system of independent particles which move according to the law of the process $\xi$ and where $\theta_+$ and $\theta_-$ play the role of killing and birth rates, respectively. First, we describe the forward motion of particles, that is, we restrict ourselves to non-negative times $t\geq 0$.  Let $\pi_0=\sum_{i=1}^{\infty} \delta_{U_i}$ be a PPP on $\R$ having intensity $\eee^{-u}\dd u$. Consider a collection of particles starting at the points $U_i$ and moving independently of each other and of $\pi_0$ according to the law of the L\'evy process $\xi$. Then, at any time $t\geq 0$ the positions of the particles form a PPP with intensity $\eee^{\psi(1) t} \eee^{-u}\dd u$. This easily follows from the transformation theorem for the PPP. So, the intensity of the particles is not preserved except when $\psi(1)=0$. In order to obtain a \textit{stationary} particle system, it is natural to introduce creation (in the case $\psi(1)<0$) or killing (in the case $\psi(1)>0$) of particles. In fact, it is possible to consider both operations simultaneously. At any moment of time $t\geq 0$, let us kill any particle (independently of everything else) with rate $\theta_-$. Independently, at any moment of time $t\geq 0$, we create a new particle at spatial position $u\in\R$ with intensity $\theta_+ \eee^{-u}\dd u \dd t$. It is clear that the intensity of particles is preserved (meaning that it equals $\eee^{-u}\dd u$ at any time $t\geq 0$) if and only if the rates $\theta_+$ and $\theta_-$  satisfy~\eqref{rates}.

Thus, we constructed a one-sided stationary particle system defined for $t\geq 0$. In order to obtain a two-sided version of the system, note that when looking at the system backwards in time, creation of particles appears as killing and vice versa. This means that for $t\leq 0$, the rates $\theta_+$ and $\theta_-$ interchange their roles. That is, for $t\leq 0$,  $\theta_-$ is the creation rate, whereas $\theta_+$ is the killing rate.

Let us describe our construction in more precise terms. There are three types of particles in the system: those which are present at time $0$, those which are born after time $0$, and those which were killed before time $0$. Quantities related to the particles of the latter two types will be marked by a tilde.
We assume that:
\begin{enumerate}
\item [(A1)] The initial spatial positions of those particles which are present at time $0$ form a PPP $\pi_0=\sum_{i=1}^{\infty}\delta_{U_i}$ on $\R$ with intensity $\eee^{-u} \dd u$.
\item [(A2)] The times of birth and the initial positions of particles born after time $0$ form a PPP $\pi_+=\sum_{i=1}^{\infty}\delta_{(\tilde T_i^+, \tilde U_i^+)}$  on $(0,\infty)\times \R$ with intensity $\theta_+\dd t \times \eee^{-u} \dd u$.
\item [(A3)] The killing times and the terminal positions of particles killed before time $0$ form a PPP  $\pi_-=\sum_{i=1}^{\infty} \delta_{(\tilde T_i^-, \tilde U_i^-)}$  on $(-\infty,0)\times \R$ with intensity $\theta_- \dd t \times  \eee^{-u} \dd u $.
\end{enumerate}
We assume that after its birth every particle moves (forward in time) according to the law of the L\'evy process $\{L^+(t)\colon t\geq 0\}$ obtained from $\{\xi(t)\colon t\geq 0\}$ by killing it with rate $\theta_-$. That is, the subprobability transition kernel $q_t^+(x, \dd y)$ of $L^+$ is related to the probability transition kernel $p_t(x,\dd y)$ of $\xi$ by
\begin{equation}\label{eq:q_t^+_p_t}
q_t^+(x, \dd y) = \eee^{-\theta_- t} p_t(x, \dd y).
\end{equation}
Let also $\{L^-(t)\colon t\geq 0\}$ be the L\'evy process which is the dual of $L^+$ w.r.t.\ the  (in general, non-invariant) measure $\eee^{-u} \dd u$. That is, the subprobability transition kernel $q^{-}(x,\dd y)$ of $L^-$ is given by
\begin{equation}\label{eq:q_+_q_-_dual}
\eee^{-x} \dd x \cdot q_t^-(x,\dd y)  = \eee^{-y} \dd y \cdot q_t^+(y, \dd x).
\end{equation}
Note that $L^-$ may be killed after finite time, in general. From~\eqref{eq:q_+_q_-_dual} and~\eqref{rates} it follows easily that the killing rate of the process $L^-$ is $\theta_+$.
Consider a two-sided process $\{L(t)\colon t\in\R\}$ obtained by pasting together independent realizations of $L^+$ and $L^-$:
\begin{align}\label{twosided_1}
    L(t) =
    \begin{cases}
      L^+(t), & \quad t\geq 0,\\
      L^-(-t), & \quad t < 0.
    \end{cases}
\end{align}
Our assumptions on the motion of particles are as follows:
\begin{enumerate}
\item[(A4)] The motion of the particles which are present at time $0$ is given by i.i.d.\ copies $L_1,L_2,\ldots$ of the process $\{L(t)\colon t\in\R\}$.
\item[(A5)] The forward in  time motion of  particles which are born after time $0$ is given by i.i.d.\ copies $\tilde L_1^+,\tilde L_2^+,\ldots$ of the process $\{ L^+(t)\colon t\geq 0\}$.
\item[(A6)] The backward in time motion of  particles which were killed before time $0$ is given by i.i.d.\ copies $\tilde L_1^-,\tilde L_2^-,\ldots$ of the process $\{ L^-(t)\colon t\geq 0\}$.
\item[(A7)] The random elements $\pi_0, \pi_+, \pi_-$, $L_i, \tilde L_i^+, \tilde L_i^-$, $i\in\N$, are independent.
\end{enumerate}
The trajectories of particles which are present at time $t=0$ are given by the two-sided random functions
\begin{equation}
V_i(t) = U_i + L_i(t), \quad t\in\R.
\end{equation}
The trajectory of a particle which is born at time $\tilde T_i^+>0$ is given by the one-sided random function
\begin{equation}
\tilde V_i^+(t)
=
\begin{cases}
-\infty, &t<\tilde T_i^+,\\
\tilde U_i^+ + \tilde L_i^+(t-\tilde T_i^+), & t \geq \tilde T_i^+.
\end{cases}
\end{equation}
Similarly, the trajectory of a particle which was killed at time $\tilde T_i^-<0$ is given by the one-sided random function
\begin{equation}
\tilde V_i^-(t)
=
\begin{cases}
\tilde U_i^- + \tilde L_i^-(\tilde T_i^--t), & t < \tilde T_i^-,\\
-\infty, &t \geq \tilde T_i^-.
\end{cases}
\end{equation}
Note that killing of a particle is interpreted as changing its coordinate to $-\infty$. We always agree that $L^+$ should be right-continuous with left limits (c\`adl\`ag), whereas $L^-$ should be left continuous with right limits, so that $L$ is again c\`adl\`ag. We regard $V_i, \tilde V_i^+, \tilde V_i^-$ as elements of the Skorokhod space $\bar D$ of c\`adl\`ag functions defined on $\R$ and taking values in $\R\cup \{-\infty\}$. Define the shifts $T_t:\bar D\to \bar D$, $t\in\R$, by  $T_t f(s) = f(s-t)$.
The next result generalizes the L\'evy--Brown--Resnick processes constructed in Section~\ref{subsec:levy_brown_resnick} by allowing random birth and killing of spectral functions.

\begin{theo}\label{theo:inv_two_sided_PPP}
The law of the following PPP on $\bar D$ is invariant with respect to the time shifts $T_t$, $t\in\R$,
$$
\Pi:=\sum_{i=1}^{\infty} \delta_{V_i} + \sum_{i=1}^{\infty} \delta_{\tilde V_i^+} + \sum_{i=1}^{\infty} \delta_{\tilde V_i^-},
$$
 and its infinite intensity measure is
\begin{align}
\lefteqn{\mu_{\Pi} \{f\in \bar D\colon f(t_1)\in \dd x_1,\ldots, f(t_n) \in \dd x_n\}}\label{eq:intensity_kuznetsov}\\
&=
\eee^{-x_1} \dd x_1\cdot q_{t_2-t_1}^+(x_1, \dd x_2)\cdot \ldots \cdot q_{t_n-t_{n-1}}^+(x_{n-1}, \dd x_n),\notag
\end{align}
for all $t_1<\ldots<t_n$ and $x_1,\ldots,x_n\in\R$.
As a consequence, the process
\begin{equation}\label{eq:eta_with_killing_def}
  \eta(t) := \max\{V_i(t), \tilde V_i^+(t), \tilde V_i^-(t)  \colon i\in\N\}, \quad t\in\R,
\end{equation}
is max-stable and stationary.
\end{theo}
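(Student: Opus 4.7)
The plan is to verify formula~\eqref{eq:intensity_kuznetsov} for $\mu_\Pi$ directly; once this is established, both stationarity and max-stability of $\eta$ follow with very little extra work. By assumption~(A7), the three summands $\sum_i\delta_{V_i}$, $\sum_i\delta_{\tilde V_i^+}$, $\sum_i\delta_{\tilde V_i^-}$ are independent Poisson point processes on $\bar D$, so by superposition $\mu_\Pi$ is the sum of their three intensities. I would evaluate each on the finite-dimensional cylinder sets $\{f\in\bar D\colon f(t_j)\in \dd x_j,\ j=1,\ldots,n\}$ with $t_1<\cdots<t_n$ and $x_j\in\R$. A key preliminary, following from~\eqref{eq:q_t^+_p_t}, the spatial homogeneity of $\xi$, and the constraint~\eqref{rates}, is the identity
\[
\int_\R \eee^{-u}\, q^+_t(u,\dd y)\,\dd u \;=\; \eee^{-\theta_+ t}\, \eee^{-y}\, \dd y, \qquad t\geq 0,
\]
which says that $\eee^{-u}\dd u$ decays along $q^+$ at rate $\theta_+$. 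When $0\leq t_1$, only the $V_i$'s and those $\tilde V_i^+$'s born in $(0,t_1)$ are alive at all of $t_1,\ldots,t_n$: the $V_i$ contribution equals $\eee^{-\theta_+ t_1}$ times the right-hand side of~\eqref{eq:intensity_kuznetsov}, while integrating the $\tilde V_i^+$ contribution over the birth time $T\in(0,t_1)$ against the rate $\theta_+\dd T$ yields the complementary factor $(1-\eee^{-\theta_+ t_1})$; the sum is exact. The case $t_n\leq 0$ is symmetric, with $\tilde V_i^-$ in the role of $\tilde V_i^+$ and the duality~\eqref{eq:q_+_q_-_dual} used to rewrite every $q^-$-factor as a $q^+$-factor.

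The main technical obstacle is the mixed case $t_k<0\leq t_{k+1}$, in which only the $V_i$'s contribute (particles born after time $0$ are not alive at $t_1<0$; particles killed before time $0$ are not alive at $t_n\geq 0$). By the conditional independence of $L^+$ and $L^-$ given $U_i=u$, the $V_i$-intensity factors as
\[
\int_\R \eee^{-u}\,\dd u \,\cdot\, \bigl[q^-\text{-chain from }u\text{ to }x_k,\ldots,x_1\bigr] \,\cdot\, \bigl[q^+\text{-chain from }u\text{ to }x_{k+1},\ldots,x_n\bigr].
\]
I would then push the factor $\eee^{-u}\dd u$ through the $q^-$-chain step by step via~\eqref{eq:q_+_q_-_dual}, which reverses it into a forward $q^+$-chain terminating at $u$; Chapman--Kolmogorov for $q^+$ then merges this reversed chain with the forward chain starting at $u$, collapsing the integral over $u$ and producing exactly the right-hand side of~\eqref{eq:intensity_kuznetsov}. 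The delicate points will be keeping track of the reference measures (the $\dd x_j$'s) through the iterated duality applications and justifying Fubini at each step.

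Once $\mu_\Pi$ has been identified with~\eqref{eq:intensity_kuznetsov}, shift invariance is immediate, because the expression depends on $t_1,\ldots,t_n$ only through the differences $t_{j+1}-t_j$; since a PPP is determined by its intensity measure, $\Pi\eqd(T_s)_*\Pi$ for every $s\in\R$, and the stationarity of $\eta$ follows because it is a measurable functional of $\Pi$ that commutes with shifts. Max-stability is a standard consequence of the PPP structure combined with the fact that $\mu_\Pi$ scales by $\eee^{-c}$ under the constant vertical shift $f\mapsto f+c$ (visible from~\eqref{eq:intensity_kuznetsov} together with the spatial translation invariance of $q^+$).
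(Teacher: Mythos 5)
Your proof is correct, but it takes a genuinely different route from the paper. You verify the intensity formula~\eqref{eq:intensity_kuznetsov} directly on all finite-dimensional cylinders, splitting into the cases $t_1\geq 0$, $t_n\leq 0$, and the mixed case, and then read off stationarity from the fact that the resulting expression depends only on the increments $t_{j+1}-t_j$. The paper goes the other way around: it first establishes stationarity of $\Pi$ abstractly, by encoding birth and killing in an extended state space $\R^2$ (spatial position plus an ``age'' coordinate), exhibiting an explicit invariant measure $\nu$ for the resulting Markov process $Z^+$ (Theorem~\ref{theo:invar_measure}), proving that $Z^+$ and $Z^-$ are in duality with respect to $\nu$ (Theorem~\ref{theo:duality}), and invoking the general Theorem~\ref{thm1} on stationary systems of independent dual Markov processes. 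Once stationarity is known, the paper reduces~\eqref{eq:intensity_kuznetsov} to the case $t_1=0$, where only the $V_i$'s contribute and the formula is just the mapping theorem for PPPs --- i.e.\ it only ever needs the trivial sub-case of your computation and avoids your mixed case entirely. What your approach buys is self-containedness and elementariness: the only inputs are the identity $\int_\R \eee^{-u}q_t^+(u,\dd y)\,\dd u=\eee^{-\theta_+t}\eee^{-y}\dd y$ (which is the paper's computation~\eqref{non_stat}), the duality~\eqref{eq:q_+_q_-_dual} applied iteratively, and Chapman--Kolmogorov; your case $t_1\geq 0$ is in fact essentially the paper's proof of the mixed moving maximum representation (Theorem~\ref{theo:MMM_rep}). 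What the paper's approach buys is generality and structure: Theorem~\ref{thm1} applies to arbitrary dual Markov semigroups with an invariant measure, and the extended-state-space construction makes the connection to Kuznetsov measures and Mitro's construction transparent. One small point to keep in mind if you write out the details: to conclude that the intensity measure on $\bar D$ (not just its restriction to paths that are finite at the chosen times) is determined by the cylinders with $x_j\in\R$, note that $\{f(t)=-\infty\}$ is the complement of $\bigcup_k\{f(t)\in[-k,k]\}$, so these cylinders do generate the relevant $\sigma$-algebra; and your Fubini applications are justified since all kernels are subprobability kernels and $\eee^{-u}\dd u$ integrates to a finite mass against each of them on the relevant ranges.
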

The proof of Theorem~\ref{theo:inv_two_sided_PPP} will be given in Section~\ref{sec:proof_stationarity}.
It relies on a more general result on stationary particle systems which is
not only valid for L\'evy processes but also for Markov processes that possess
an invariant measure.

\begin{rem}
By using the duality relation between $q^+_t$ and $q^-_t$, see~\eqref{eq:q_+_q_-_dual}, the right-hand side of~\eqref{eq:intensity_kuznetsov} can be rewritten in the following form:
$$
\eee^{-x_n} \dd x_n \cdot q_{t_n-t_{n-1}}^-(x_n, \dd x_{n-1})\cdot \ldots \cdot q_{t_2-t_{1}}^-(x_2, \dd x_1). $$
\end{rem}

\begin{definition}
The stationary max-stable process $\eta$ defined in~\eqref{eq:eta_with_killing_def} will be called a \textit{L\'evy--Brown--Resnick} process.
\end{definition}

\begin{ex}[See Figure~\ref{fig:LBR_BM}]\label{ex:LBR_BM}
Let $\{B(t)\colon  t\geq 0\}$ be a standard Brownian motion. Fix a scale parameter $\sigma>0$ and a drift $\lambda\in\R$. Let $\xi(t) = \sigma B(t) + \lambda t$, $t\geq 0$. Then, $\psi(1)=\lambda + \frac 12 \sigma^2$; see~\eqref{psi}. Fix a killing rate $\theta_-\geq \psi(1)$ and let $L^+$ be the process obtained by killing $\xi$ with rate $\theta_-$. A straightforward calculation using~\eqref{eq:q_t^+_p_t} and~\eqref{eq:q_+_q_-_dual} shows that the dual process $L^-$ has the same law as $\sigma B(t)- (\sigma^2 + \lambda) t$ killed at rate $\theta_+ := \theta_--\psi(1)\geq 0$.
Figure~\eqref{fig:LBR_BM} shows the corresponding max-stable process $\eta$ together with the particle trajectories.
In the case when $\sigma=1$, $\lambda=-\frac 12$ and $\theta_-=\theta_+=0$, we recover the original Brown--Resnick process $\eta_{\text{BR}}$; see~\eqref{BR}.
\end{ex}

Equation \eqref{eq:intensity_kuznetsov} states that the intensity of $\Pi$ is the so-called \textit{Kuznetsov measure} associated with the killed L\'evy process $L^+$ and the excessive measure $\mu(\dd u)=\eee^{-u}\dd u$. Kuznetsov measures can be associated with any Markov process and any excessive $\sigma$-finite measure $\mu$; see~\cite{kuznetsov}. The excessivity means that $P_t\mu\leq \mu$, where $P_t$ is the transition kernel of the Markov process. In our case, the excesssivity of $\mu$ w.r.t.\ the kernel $q_t^+$ follows from the inequality $\theta_-\geq \psi(1)$; see~\eqref{rates}. The existence of Kuznetsov measures was established in~\cite{kuznetsov} using Kolmogorov's extension theorem; see also the work of~\citet{getoor_glover} and~\citet{mitro} for alternative constructions.

\begin{rem}
The measure $\mu_\Pi$ is the so-called \textit{exponent measure} of the max-stable process $\eta$, that is for all $y_1,\ldots,y_n\in\R$,
\begin{equation}\label{eq:fidi_eta_exponent measure}
\P[\eta(t_1)\leq y_1,\ldots, \eta(t_n)\leq y_n]
 =
\exp\left( -\mu_{\Pi}\{f\in\bar D\colon f(t_i)>y_i \text{ for some } i\}\right).
\end{equation}
\end{rem}
\begin{rem}
Denote the L\'evy triple of $\xi$ by $(\nu_+, \sigma^2_+, d_+)$. Let us show that the laws of the L\'evy--Brown--Resnick processes~\eqref{eq:eta_with_killing_def} are in one-to-one correspondence with quintuples  $(\nu_+, \sigma^2_+, d_+, \theta_+,\theta_-)$ satisfying~\eqref{psi} and~\eqref{rates}. By construction, any such quintuple determines the law of  $\eta$ uniquely. Let us prove the converse.  The law of $\eta$ determines the exponent measure $\mu_\Pi$ uniquely; see~\eqref{eq:fidi_eta_exponent measure}. From~\eqref{eq:intensity_kuznetsov} with $n=2$ it follows that $\mu_\Pi$ determines the kernel $q^+_t$ and hence, by~\eqref{eq:q_t^+_p_t}, the law of $\xi(1)$ and the rate $\theta_-$ uniquely. By~\eqref{rates}, $\theta_+$ is also uniquely determined. So, the law of $\eta$ determines the quintuple $(\nu_+, \sigma^2_+, d_+, \theta_+,\theta_-)$ uniquely.
\end{rem}
\begin{prop}\label{prop:reversion}
If $\{\eta(t)\colon t\in\R\}$ is the L\'evy--Brown--Resnick process determined by the quintuple $(\nu_+, \sigma^2_+, d_+, \theta_+,\theta_-)$, then the reversed process $\{\eta(-t)\colon t\in\R\}$ is also a L\'evy--Brown--Resnick process with the quintuple $(\nu_-, \sigma^2_-, d_-, \theta_-,\theta_+)$, where $\nu_-(-\dd x) = \eee^x \nu_+(\dd x)$, $\sigma_-^2=\sigma_+^2$ and $d_-$ is uniquely determined by the remaining parameters.
\end{prop}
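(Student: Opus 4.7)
The plan is to use the fact that the law of an LBR process is determined by its exponent measure $\mu_\Pi$, which has the Kuznetsov form~\eqref{eq:intensity_kuznetsov}; this is precisely the content of the uniqueness remark just above Proposition~\ref{prop:reversion}. Hence it suffices to show that the exponent measure $\tilde\mu_\Pi$ of the time-reversed process $\tilde\eta(t):=\eta(-t)$ coincides with the Kuznetsov intensity built from the quintuple $(\nu_-,\sigma^2_-,d_-,\theta_-,\theta_+)$.

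First I would compute $\tilde\mu_\Pi$, the pushforward of $\mu_\Pi$ under the map $f\mapsto f(-\cdot)$. For $t_1<\ldots<t_n$ and $y_1,\ldots,y_n\in\R$, setting $\sigma_j:=-t_{n+1-j}$ and $x_j:=y_{n+1-j}$, one has $\tilde\mu_\Pi\{f(t_i)\in\dd y_i\}=\mu_\Pi\{f(\sigma_j)\in\dd x_j\}$. Applying~\eqref{eq:intensity_kuznetsov} at the ordered times $\sigma_1<\ldots<\sigma_n$ and then using the duality~\eqref{eq:q_+_q_-_dual} telescopically (equivalently, the alternative form of the Kuznetsov intensity given in the remark following Theorem~\ref{theo:inv_two_sided_PPP}), one obtains
\begin{equation*}
\tilde\mu_\Pi\{f(t_i)\in\dd y_i,\,i=1,\ldots,n\}=\eee^{-y_1}\dd y_1\cdot q^-_{t_2-t_1}(y_1,\dd y_2)\cdots q^-_{t_n-t_{n-1}}(y_{n-1},\dd y_n).
\end{equation*}
This is exactly the Kuznetsov intensity~\eqref{eq:intensity_kuznetsov} with the forward subprobability kernel replaced by $q^-_t$.

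Next I would identify this kernel as the forward kernel of an LBR built from the quintuple $(\nu_-,\sigma^2_-,d_-,\theta_-,\theta_+)$. The paper observes (just after~\eqref{eq:q_+_q_-_dual}) that $L^-$ is killed at rate $\theta_+$; hence $L^-$ equals in law some unkilled L\'evy process $\tilde\xi$ killed at rate $\theta_+$, and $q^-_t(x,\dd y)=\eee^{-\theta_+ t}\tilde p_t(x,\dd y)$ by the analogue of~\eqref{eq:q_t^+_p_t}. This forces the killing and birth rates of the reversed LBR to be $\tilde\theta_-=\theta_+$ and $\tilde\theta_+=\theta_-$, the latter via~\eqref{rates} after noting that $\tilde\psi(1)=-\psi(1)$, a short Esscher-transform computation: $\tilde\xi\eqd-\hat\xi$ with $\hat\xi$ the Esscher tilt of $\xi$ at parameter $1$, whose Laplace exponent at $-1$ equals $-\psi(1)$. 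The L\'evy triple $(\nu_-,\sigma^2_-,d_-)$ of $\tilde\xi$ is then read off from the same Esscher tilt plus sign reversal, exactly as in the third bullet of the Example in Section~\ref{subsec:levy_brown_resnick}: $\nu_-(-\dd x)=\eee^x\nu_+(\dd x)$, $\sigma^2_-=\sigma^2_+$, and $d_-$ is forced by the reversed version of~\eqref{rates}.

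The main obstacle is the index bookkeeping in the first step: the relabelling $\sigma_j=-t_{n+1-j}$ swaps the direction in which the telescoping product unfolds, and one must verify that the normalization factor $\eee^{-y_1}\dd y_1$ emerges on the correct end (not $\eee^{-y_n}\dd y_n$) after converting the $q^+$ kernels to $q^-$ kernels. Once this identification is clean, the second step reduces to routine Esscher-transform bookkeeping for L\'evy triples.
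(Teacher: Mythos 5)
Your proof is correct, but it takes a genuinely different route from the paper's. The paper's own argument is a two-line appeal to symmetry: reading the particle-system construction of Section~\ref{subsec:levy_brown_resnick_general} backwards in time manifestly exchanges the roles of $(\theta_+,L^+)$ and $(\theta_-,L^-)$ (birth becomes killing and vice versa), after which the relation between the L\'evy triples is quoted from the standard behaviour of triples under the Esscher transform. You instead work at the level of exponent measures: you push $\mu_\Pi$ forward under $f\mapsto f(-\cdot)$, use the backward form of the Kuznetsov intensity from the remark after Theorem~\ref{theo:inv_two_sided_PPP} to land exactly on $\eee^{-y_1}\dd y_1\cdot q^-_{t_2-t_1}(y_1,\dd y_2)\cdots q^-_{t_n-t_{n-1}}(y_{n-1},\dd y_n)$, and then identify $q^-_t$ as a L\'evy kernel killed at rate $\theta_+$ with the stated triple. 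Your index bookkeeping is right: with $\sigma_j=-t_{n+1-j}$ and $x_j=y_{n+1-j}$ the normalization $\eee^{-x_n}\dd x_n$ of the backward form becomes $\eee^{-y_1}\dd y_1$, i.e.\ it sits at the earliest reversed time as required, and the increments $\sigma_{j+1}-\sigma_j=t_{n+1-j}-t_{n-j}$ reproduce the correct time gaps. The Esscher computation $\tilde\psi(1)=-\psi(1)=\theta_+-\theta_-$ then forces the reversed rates via~\eqref{rates}. What your approach buys is explicitness: it does not rest on the informal claim that the reversed system ``has the same structure'', but instead verifies the identity of finite-dimensional distributions through~\eqref{eq:fidi_eta_exponent measure} and the uniqueness of the quintuple established in the remark preceding the proposition. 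The cost is length; the paper's symmetry argument, while terser, conveys the same content once one accepts that the construction (A1)--(A7) is time-reversal covariant.
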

\begin{proof}
From the definition of the L\'evy--Brown--Resnick processes it follows that the reversed process $\eta(-t)$ has the same structure as $\eta(t)$, but the pairs $(\theta_+, L^+)$ and $(\theta_-, L^-)$ interchange their roles. The relation between the L\'evy triples of $L^+$ and $L^-$ follows from the well-known transformation properties of L\'evy triples under exponential tilting; see~\cite[Theorem~3.9]{kyprianou_book}.
\end{proof}
\begin{cor}
The process $\eta$ is reversible, that is, $\{\eta(t)\colon t\in\R\}$ has the same law as $\{\eta(-t)\colon t\in\R\}$, if and only if $\xi$ is a Brownian motion with linear drift and $\theta_-=\theta_+$. In particular, if there is no killing, then $\eta$ is reversible if and only if it is the original Brown--Resnick process $\eta_{\text{BR}}$.
\end{cor}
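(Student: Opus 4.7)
The plan is to combine Proposition~\ref{prop:reversion} with the uniqueness of the quintuple stated in the Remark just before it. Since the law of a L\'evy--Brown--Resnick process determines its quintuple uniquely, reversibility $\eta \eqd \eta(-\cdot)$ is equivalent to the identity
\[
(\nu_+, \sigma_+^2, d_+, \theta_+, \theta_-) = (\nu_-, \sigma_-^2, d_-, \theta_-, \theta_+).
\]
Using $\sigma_-^2 = \sigma_+^2$ automatically and $\nu_-(-\dd x) = \eee^x \nu_+(\dd x)$ from the proposition, this decomposes into three conditions: (i) $\theta_+ = \theta_-$; (ii) the symmetry $\nu_+(-\dd x) = \eee^x \nu_+(\dd x)$; and (iii) $d_+ = d_-$, which becomes automatic once (i) and (ii) hold, because both drifts are then pinned down by the same identity $\psi(1) = 0$ applied to the same L\'evy triple.

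The substantive part of the proof is to show that (ii), combined with the L\'evy-measure integrability and the finite-exponential-moment condition $\int \eee^x \nu_+(\dd x) < \infty$ required in~\eqref{psi}, forces $\nu_+ \equiv 0$. A convenient reformulation is to introduce $\mu(\dd x) := \eee^{x/2} \nu_+(\dd x)$, which under (ii) becomes invariant under $x \mapsto -x$, and then to combine the symmetry of $\mu$ with the integrability constraints that $\nu_+$ must satisfy to conclude $\mu \equiv 0$. I expect this step to be the main obstacle of the proof, as it is where all the genuine analytic content sits.

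Once $\nu_+ = 0$ is established, $\xi$ is of the form $\sigma B(t) + \lambda t$ for some $\sigma \geq 0$, $\lambda \in \R$, and condition (i) together with $\psi(1) = \lambda + \sigma^2/2 = 0$ forces $\lambda = -\sigma^2/2$. Conversely, any such Brownian motion with drift, paired with $\theta_+ = \theta_-$, yields a reversible L\'evy--Brown--Resnick process, because Proposition~\ref{prop:reversion} then sends the quintuple $(0, \sigma^2, -\sigma^2/2, \theta, \theta)$ to itself. The `In particular' claim follows: with $\theta_+ = \theta_- = 0$ and $\xi(t) = \sigma B(t) - (\sigma^2/2) t$, the resulting $\eta$ is, up to the time rescaling implicit in the choice of $\sigma$, the original Brown--Resnick process~\eqref{BR}.
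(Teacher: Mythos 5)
Your reduction is exactly the paper's: by the uniqueness of the quintuple and Proposition~\ref{prop:reversion}, reversibility is equivalent to $\theta_+=\theta_-$ together with the symmetry $\nu_+(-\dd x)=\eee^{x}\nu_+(\dd x)$, the drift identity then being automatic. The gap is precisely the step you flagged and postponed: you never show that this symmetry, combined with the integrability constraints, forces $\nu_+\equiv 0$, and in fact it does not. The relation $\nu_+(-\dd x)=\eee^{x}\nu_+(\dd x)$ is an involution, so it has many nonzero fixed points; in your own reformulation, $\mu(\dd x)=\eee^{x/2}\nu_+(\dd x)$ is required to be symmetric, but \emph{any} nonzero symmetric $\mu$ with $\int_{|x|\leq 1}x^{2}\,\mu(\dd x)<\infty$ and $\int_{|x|>1}\eee^{|x|/2}\,\mu(\dd x)<\infty$ produces an admissible $\nu_+=\eee^{-x/2}\mu(\dd x)$ satisfying both the L\'evy-measure condition and~\eqref{psi}. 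Concretely, take $\nu_+=\delta_{1}+\eee\,\delta_{-1}$, $\sigma_+^{2}=0$, $\theta_+=\theta_-=0$ and the drift fixed by $\psi(1)=0$ (here $d_+=0$): then $\nu_-(\{-1\})=\eee^{1}\nu_+(\{1\})=\eee$ and $\nu_-(\{1\})=\eee^{-1}\nu_+(\{-1\})=1$, so $\nu_-=\nu_+$, hence $L^-\eqd L^+$ and the resulting $\eta$ is reversible, although $\xi$ is a compound Poisson process and not a Brownian motion. So the step you called ``the main obstacle'' is not merely hard; it is false, and no argument will close it.

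You should be aware that the paper's own proof consists of the single assertion that Proposition~\ref{prop:reversion} ``immediately'' yields $\theta_+=\theta_-$ and $\nu_+=\nu_-=0$; it tacitly makes the same unjustified leap, since $\nu_+=\nu_-$ together with $\nu_-(-\dd x)=\eee^{x}\nu_+(\dd x)$ does not give $\nu_+=0$ in general. Your instinct that all the analytic content of the ``only if'' direction sits in this step was correct. The implication does hold under an extra hypothesis such as spectral one-sidedness of $\xi$: if $\nu_+$ is concentrated on one half-line, then $\nu_-$ is concentrated on the opposite half-line, and $\nu_+=\nu_-$ forces both to vanish. The converse direction and the ``in particular'' statement in your last paragraph are fine as written.
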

\begin{proof}
From Proposition~\ref{prop:reversion} we immediately obtain that for a reversible process $\eta$ we must have $\theta_+=\theta_-$ and $\nu_+=\nu_-=0$.
\end{proof}

\subsection{An explicit mixed moving maximum representation}\label{subsec:MMM}
The construction of L\'evy--Brown--Resnick processes given in Section~\ref{subsec:levy_brown_resnick_general} divides the spectral functions according to whether they are present (that is, not equal to $-\infty$) at time $t=0$ or not.
One may ask whether there is a more natural, translation invariant construction. A possible way to obtain such construction is to choose on any trajectory from $\Pi$ some ``reference point'' in a translation invariant way. In the case when $\psi(1)=\theta_+=\theta_-=0$, all paths from the PPP $\Pi$ are defined on the whole real axis (with birth at time $-\infty$ and killing at time $+\infty$). In this case, it is natural to  choose the maximum of the trajectory as the reference point. Following this approach, \citet{engelke_ivanovs} obtained an explicit representation of $\eta$ as a translation invariant mixed moving maximum process.

Here, we will give a translation invariant construction of $\Pi$ in the case when at least one rate $\theta_-, \theta_+$ is strictly positive. Let us assume that $\theta_->0$. This assumption means that the birth time of each path in $\Pi$ is finite and it is natural to consider the birth point as the reference point of the path. The following objects will be needed to describe an alternative construction of $\Pi$:
\begin{enumerate}
\item[(B1)] Let $\rho_+ := \sum_{i=1}^{\infty}\delta_{(S_i^+,V_i^+)}$ be a PPP on $\R\times \R$ with intensity $\theta_+ \dd s \times \eee^{-v}\dd v$.
\item[(B2)] Let $L_1^+,L_2^+,\ldots$ be i.i.d.\ copies of the killed L\'evy process $\{L^+(t)\colon t\geq 0\}$.
\item[(B3)] Let the random elements $\rho_+,L_1^+,L_2^+,\ldots$ be independent.
\end{enumerate}
Consider particles which are born at times $S_i^+$, have initial spatial positions $V_i$, and move (forward in time) according to the processes $L_i^+$. The trajectories of these particles are given by the one-sided functions
\begin{equation}
W_i^+(t)
=
\begin{cases}
-\infty, &t< S_i^+,\\
V_i^+ + L_i^+(t - S_i^+), & t \geq S_i^+.
\end{cases}
\end{equation}
\begin{theo}\label{theo:MMM_rep}
Let $\theta_->0$. Then, the PPP $\Pi$ from Theorem~\ref{theo:inv_two_sided_PPP} has the same intensity as the PPP
$$
\Pi' := \sum_{i=1}^{\infty} \delta_{W_i^+}.
$$
\end{theo}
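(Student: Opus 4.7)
The plan is to show that $\Pi$ and $\Pi'$ have the same intensity measure, which determines them in law as Poisson point processes. Since a locally finite measure on $\bar D$ is determined by its cylinder evaluations, I only need to match the intensities on sets of the form $\{f\in\bar D\colon f(t_j)\in\dd x_j,\ j=1,\ldots,n\}$ for $t_1<\cdots<t_n$ and $x_j\in\R$ against the Kuznetsov-type formula \eqref{eq:intensity_kuznetsov}. For $\Pi'$ this is a direct Campbell computation: marking $\rho_+$ with the i.i.d.\ trajectories $L_i^+$ produces a PPP of triples $(S_i^+,V_i^+,L_i^+)$ of intensity $\theta_+\,\dd s\times\eee^{-v}\,\dd v\times\P_{L^+}$, and each such triple deterministically determines $W_i^+$. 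Since $W_i^+(t_1),\ldots,W_i^+(t_n)$ are all finite iff $S_i^+\leq t_1$, the Markov property of $L_i^+$ gives
\[
\mu_{\Pi'}\{f(t_j)\in\dd x_j\}=\left(\int_{-\infty}^{t_1}\theta_+\,\dd s\int_\R \eee^{-v}\,\dd v\cdot q^+_{t_1-s}(v,\dd x_1)\right)\prod_{k=2}^n q^+_{t_k-t_{k-1}}(x_{k-1},\dd x_k).
\]

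The Markov product on the right already matches the tail of \eqref{eq:intensity_kuznetsov} verbatim, so the theorem reduces to the identity
\[
\int_{-\infty}^{t_1}\theta_+\,\dd s\int_\R \eee^{-v}\,\dd v\cdot q^+_{t_1-s}(v,\dd x_1)=\eee^{-x_1}\,\dd x_1.
\]
After substituting $u=t_1-s$, the duality relation \eqref{eq:q_+_q_-_dual} rewrites the integrand as $\eee^{-x_1}\,\dd x_1\cdot q^-_u(x_1,\dd v)$; integrating out $v$ produces the survival probability $\eee^{-\theta_+ u}$ of $L^-$, since the killing rate of $L^-$ is $\theta_+$. What remains is $\eee^{-x_1}\,\dd x_1\int_0^\infty\theta_+\eee^{-\theta_+ u}\,\dd u = \eee^{-x_1}\,\dd x_1$, because the last integral normalizes an exponential density of rate $\theta_+$. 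This is precisely the point at which the positivity hypothesis is used: it ensures the exponential backward lifetime is a genuine probability distribution.

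The calculation is short, so there is no serious technical obstacle beyond keeping the duality exchange organized. Conceptually, the identity is a detailed-balance statement: the birth point is an admissible translation-invariant reference marker on each stationary path precisely because the exponential backward lifetime, of rate $\theta_+$, when averaged against the birth intensity $\theta_+\,\dd s$, reconstructs the equilibrium spatial intensity $\eee^{-v}\,\dd v$ appearing in \eqref{eq:intensity_kuznetsov}.
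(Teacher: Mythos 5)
Your argument is correct and essentially identical to the paper's: both compute $\mu_{\Pi'}$ on cylinder sets via the Campbell formula and the Markov property of $L^+$, reducing the claim to the identity $\int_{-\infty}^{t_1}\theta_+\,\dd s\int_\R\eee^{-v}\,q^+_{t_1-s}(v,\dd x_1)\,\dd v=\eee^{-x_1}\dd x_1$, and the only (cosmetic) difference is that the paper evaluates the inner integral directly as $\eee^{(\psi(1)-\theta_-)(t_1-s)}\eee^{-x_1}\dd x_1$ using $q_t^+=\eee^{-\theta_- t}p_t$ and \eqref{rates}, whereas you obtain the same factor $\eee^{-\theta_+(t_1-s)}$ via the duality relation \eqref{eq:q_+_q_-_dual} and the survival probability of $L^-$. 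One caveat: the final normalization $\int_0^\infty\theta_+\eee^{-\theta_+u}\,\dd u=1$ genuinely requires $\theta_+>0$ (if $\theta_+=0$ the intensity of $\rho_+$ vanishes and $\Pi'$ is empty), so the positivity actually used is that of $\theta_+$ rather than the stated $\theta_-$ --- but this implicit requirement is present in the paper's own computation as well, so it is not a defect specific to your write-up.
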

\begin{proof}
Let us denote by $\mu_{\Pi'}$ the intensity of the PPP $\Pi'$ on the Skorokhod space $\bar D$. We will show that $\mu_{\Pi'}$ coincides with the intensity $\mu_{\Pi}$ in~\eqref{eq:intensity_kuznetsov}. Fix $t_1<\ldots <t_n$ and $x_1<\ldots<x_n$. Since a path $f\in\Pi'$ can be born at any point $(s,v)\in\R^2$ with intensity $\theta_+ \dd s \times \eee^{-v}\dd v$, we have
\begin{align}
\lefteqn{\mu_{\Pi'}\{f\in \bar D\colon f(t_1)\in \dd x_1, \ldots, f(t_n)\in \dd x_n\}}\label{eq:mu_Pi}\\
&=
\left(\int_{-\infty}^{t_1} \int_{-\infty}^{+\infty} q^+_{t_1-s} (v,\dd x_1) \theta_+ \eee^{-v} \dd v \dd s\right) q_{t_2-t_1}^+(x_1, \dd x_2) \ldots  q_{t_n-t_{n-1}}^+(x_{n-1}, \dd x_n).\notag
\end{align}
Note that
$$
\int_{-\infty}^{+\infty}  q^+_{t_1-s}(v, \dd x_1) \eee^{-v} \dd v = \eee^{(\psi(1) - \theta_-)(t_1-s)} \eee^{-x_1}\dd x_1.
$$
Hence, the double integral on the right-hand side of~\eqref{eq:mu_Pi} equals
$$
 \int_{-\infty}^{t_1} \theta_+ \eee^{(\psi(1) - \theta_-)(t_1-s)} \dd s\cdot  \eee^{-x_1} \dd x_1 = \eee^{-x_1}\dd x_1,
$$
 where  we used the basic relation~\eqref{rates}. The resulting expression for $\mu_{\Pi'}$ coincides with the formula for $\mu_{\Pi}$ given in~\eqref{eq:intensity_kuznetsov}.
\end{proof}
In the case $\theta_+>0$ (which means that the killing times of the particles are finite), there is a ``backward'' representation of $\Pi$ analogous to the ``forward'' representation stated in Theorem~\ref{theo:MMM_rep}. For $\theta_+>0$, the killing points of the paths $(S_i^-,V_i^-)$ form a PPP
on $\R\times \R$ with intensity $\theta_- \dd s \times \eee^{-v}\dd v$. Attaching to each point $(S_i^-,V_i^-)$ a copy of the process $L^-$ backward in time, we obtain a system of paths which has the same law as $\Pi$. In the case when both $\theta_+$ and $\theta_-$ are non-zero (meaning that both birth and killing times of the paths are finite), both representations (the forward one and the backward one) are valid.

\subsection{General properties of L\'evy--Brown--Resnick processes}\label{subsec:general_properties}
Let $\eta$ be a L\'evy--Brown--Resnick process as constructed in the previous sections.
\begin{prop}\label{prop_finite}
Fix a compact set $K\subset \mathbb{R}$. Then, the set
$$
J:=\{i\in\N\colon \exists t\in K \text{ such that } \eta(t) = V_i(t) \text{ or } \eta(t) = \tilde V_i^+(t) \text{ or } \eta(t) = \tilde V_i^-(t)\}
$$
is a.s.\ finite. That is, with probability $1$, only finitely many paths $V_i, \tilde V_i^+, \tilde V_i^-$ contribute to the process $\{\eta(t) \colon  t\in K\}$.
\end{prop}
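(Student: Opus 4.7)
The key step is to show that for every $c\in\R$ and every compact $K\subseteq[a,b]$,
$$\mu_\Pi\{f\in\bar D\colon \sup_{t\in K}f(t)>c\}<\infty,$$
where $\mu_\Pi$ is the Kuznetsov intensity given in~\eqref{eq:intensity_kuznetsov}. I would decompose $\Pi$ according to the earliest time in $[a,b]$ at which a path is alive: either the path is alive at $a$, in which case its value at $a$ has intensity $e^{-y}\,dy$ and it subsequently evolves forward as the killed L\'evy process $L^+$, or it is born at some $s\in(a,b]$ (contributing via the birth PPP with intensity $\theta_+\,ds\times e^{-v}\,dv$, and vacuous if $\theta_+=0$). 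For either type, Campbell's formula together with the substitution $z=c-y$ bounds the contribution by a constant multiple of $e^{-c}\,\E[\exp(\sup_{t\in[0,b-a]}L^+(t))]$. Since $L^+(t)\leq\xi(t)$ almost surely and $\E[e^{\xi(1)}]=e^{\psi(1)}<\infty$, an Ottaviani-type (or Wiener--Hopf) maximal inequality yields $\E[\exp(\sup_{t\in[0,T]}\xi(t))]<\infty$ for each $T>0$, completing this step.

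Next I would argue that $\inf_{t\in K}\eta(t)>-\infty$ almost surely. The PPP $\pi_0$ has infinitely many atoms a.s., and conditionally on $\pi_0$ the two-sided paths $L_i$ are i.i.d.\ with the law of $L$. The event ``$L_i$ is finite on the whole of $[a,b]$'' has a fixed positive probability $p=e^{-\theta_-(b\vee 0)-\theta_+((-a)\vee 0)}>0$, so by a Borel--Cantelli / SLLN argument, almost surely some index $i_0$ satisfies this event. For such $i_0$, the c\`adl\`ag real-valued function $V_{i_0}=U_{i_0}+L_{i_0}$ is bounded on the compact set $K$, and since $\eta\geq V_{i_0}$ pointwise on $K$, one obtains $\inf_K\eta\geq\inf_K V_{i_0}>-\infty$.

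Combining the two ingredients finishes the proof: for any $N>0$, on the event $A_N:=\{\inf_K\eta>-N\}$ every contributing path $f$ must satisfy $\sup_K f\geq\inf_K\eta>-N$, and by the first step the number of paths in $\Pi$ with $\sup_K f>-N$ is Poisson with finite mean $\mu_\Pi\{f\colon\sup_K f>-N\}$, hence a.s.\ finite. Since $\P(A_N)\to 1$ as $N\to\infty$, the set $J$ in the proposition is a.s.\ finite. The main technical obstacle is the exponential integrability of $\sup_{t\in[0,T]}\xi(t)$ under only $\psi(1)<\infty$: Doob's $L^p$-inequality applied to the martingale $\exp(\xi(t)-\psi(1)t)$ would require $\psi(1+\varepsilon)<\infty$ for some $\varepsilon>0$, which is not assumed here, so a more delicate maximal inequality (Ottaviani--Kemperman type, or a Wiener--Hopf factorisation for the killed process) is needed in general.
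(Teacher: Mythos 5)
Your proof is correct and follows essentially the same route as the paper's: a surviving two-sided path gives an a.s.\ finite lower bound for $\eta$ on $K$, and the paths whose supremum over $K$ exceeds any fixed level form a Poisson number with finite mean, the finiteness resting on a maximal inequality of Ottaviani/L\'evy type together with $\E \eee^{\xi(1)}<\infty$. For the step you flag as the main obstacle, the paper invokes Willekens' bound $\P[\sup_{t\in[0,T]}L^+(t)>v]\le C\,\P[L^+(T)>v-1]$ for $v>1$, which is precisely the Ottaviani-type estimate you anticipate.
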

The proof of Proposition~\ref{prop_finite} will be given in Section~\ref{subsec:proof_prop_finite}.
Since the pointwise maximum of finitely many c\`adl\`ag functions is again c\`adl\`ag,
the sample paths of the process $\eta$ are c\`adl\`ag with probability $1$.

A convenient measure of dependence for max-stable processes is the \textit{extremal correlation function} defined by
\begin{equation}
\rho(t) = 2 + \log \P[\eta(0)<0, \eta(t)<0]\in [0,1].
\end{equation}
\begin{prop}\label{prop:extremal_corr}
The extremal correlation function of $\eta$ is given by
\begin{equation}\label{eq:extremal_corr}
\rho(t) = \eee^{-\theta_+ t} - \eee^{-\theta_- t} \int_0^{\infty} \eee^{u} \P[\xi(t)>u]\dd u,   \quad t\geq 0.
\end{equation}
In particular, in the case $\theta_- = \theta_+ = 0$, we have
\begin{equation}\label{eq:rho_formula}
\rho(t) = \E  \min \{1, \eee^{L(t)}\}, \qquad t\in\R.
\end{equation}
\end{prop}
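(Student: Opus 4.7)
The plan is to express $\rho(t)$ directly through the exponent measure $\mu_{\Pi}$ of $\eta$ and then reduce the two-dimensional integral that appears to one-dimensional moments of $\xi(t)$, using~\eqref{eq:q_t^+_p_t} and the relation $\psi(1)=\theta_--\theta_+$ from~\eqref{rates}.

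First I would use~\eqref{eq:fidi_eta_exponent measure} to write
$\P[\eta(0)<0,\eta(t)<0]=\exp(-\mu_\Pi\{f\in\bar D\colon f(0)\geq 0\text{ or }f(t)\geq 0\})$.
Evaluating~\eqref{eq:intensity_kuznetsov} with $n=1$ gives $\mu_\Pi\{f(s)\geq 0\}=\int_0^\infty\eee^{-x}\dd x=1$ for every $s$, so inclusion–exclusion collapses the previous display to $\rho(t)=\mu_\Pi\{f(0)\geq 0,\,f(t)\geq 0\}$ for $t\geq 0$. Next, applying~\eqref{eq:intensity_kuznetsov} with $n=2$, $t_1=0$, $t_2=t$, and substituting $q_t^+(x_1,\dd x_2)=\eee^{-\theta_-t}p_t(x_1,\dd x_2)$ from~\eqref{eq:q_t^+_p_t} yields
$$
\rho(t)=\eee^{-\theta_-t}\int_0^\infty\eee^{-x}\,\P[\xi(t)\geq -x]\,\dd x
=\eee^{-\theta_-t}\int_{-\infty}^0\eee^{y}\,\P[\xi(t)\geq y]\,\dd y,
$$
after the change of variables $y=-x$. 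Using Fubini together with the identity $\E\eee^{\xi(t)}=\eee^{\psi(1)t}$ (which is~\eqref{psi}), I split the integral over $\R$ to obtain
$$
\int_{-\infty}^0\eee^{y}\,\P[\xi(t)\geq y]\,\dd y=\eee^{\psi(1)t}-\int_0^\infty\eee^{u}\,\P[\xi(t)\geq u]\,\dd u,
$$
and then~\eqref{rates} turns the prefactor $\eee^{(\psi(1)-\theta_-)t}$ into $\eee^{-\theta_+t}$. This gives~\eqref{eq:extremal_corr} (replacing $\geq$ by $>$ is harmless since $\{\xi(t)=u\}$ contributes Lebesgue null).

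For the special case $\theta_-=\theta_+=0$ we have $\psi(1)=0$, no killing, and $L(t)=\xi(t)$ for $t\geq 0$. A further application of Fubini identifies $\int_0^\infty\eee^{u}\P[\xi(t)>u]\,\dd u=\E[(\eee^{\xi(t)}-1)\ind_{\xi(t)>0}]$, so~\eqref{eq:extremal_corr} becomes $\rho(t)=1+\P[\xi(t)>0]-\E[\eee^{\xi(t)}\ind_{\xi(t)>0}]$; on the other hand, the decomposition $\min\{1,\eee^{x}\}=\eee^{x}\ind_{x\leq 0}+\ind_{x>0}$ together with $\E\eee^{\xi(t)}=1$ gives exactly the same expression for $\E\min\{1,\eee^{L(t)}\}$. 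For $t<0$, stationarity gives $\rho(t)=\rho(-t)$, and the tilting identity~\eqref{eq:tilting} combined with a change of variables implies $\E\min\{1,\eee^{L^-(s)}\}=\E\min\{1,\eee^{L^+(s)}\}$ for $s\geq 0$, so~\eqref{eq:rho_formula} extends to all $t\in\R$. The only point needing care is bookkeeping at step one: paths of type $\tilde V_i^\pm$ are excluded from $\{f(0)\geq 0\}$ and $\{f(t)\geq 0\}$ automatically because they equal $-\infty$ at those times, so no separate contribution from birth/killing paths has to be tracked.
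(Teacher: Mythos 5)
Your argument is correct, but it follows a genuinely different route from the paper's. The paper computes $\P[\eta(0)<0,\eta(t)<0]$ directly from the particle-system construction, decomposing the event into three independent Poisson events (no particle above $0$ at time $0$; no particle below $0$ at time $0$ that is above $0$ at time $t$; no particle born in $[0,t]$ that is above $0$ at time $t$) and summing the corresponding intensities; the newborn particles produce the term $1-\eee^{-\theta_+t}$ explicitly. You instead pass through the exponent measure: the identity $\rho(t)=\mu_\Pi\{f(0)\geq 0,\,f(t)\geq 0\}$ obtained by inclusion--exclusion from $\mu_\Pi\{f(s)\geq 0\}=1$, followed by the bivariate marginal~\eqref{eq:intensity_kuznetsov}, the relation~\eqref{eq:q_t^+_p_t}, and the Fubini identity $\int_\R\eee^y\P[\xi(t)\geq y]\,\dd y=\eee^{\psi(1)t}$. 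This is clean, makes the interpretation of $\rho$ as a bivariate extremal coefficient transparent, and your handling of the special case and of negative $t$ via stationarity plus the Esscher identity $\E\min\{1,\eee^{L^-(s)}\}=\E[\eee^{L^+(s)}\min\{1,\eee^{-L^+(s)}\}]=\E\min\{1,\eee^{L^+(s)}\}$ is also fine. The price is that your route leans on the full strength of Theorem~\ref{theo:inv_two_sided_PPP}, in particular on the shift-invariance of $\mu_\Pi$, which is precisely what makes $\mu_\Pi\{f(t)\geq 0\}$ equal to $1$; the paper's proof does that bookkeeping by hand.

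One remark in your final sentence is inaccurate, though it does not damage the proof: paths $\tilde V_i^+$ born at a time in $(0,t]$ are \emph{not} excluded from $\{f(t)\geq 0\}$ --- they can well be nonnegative at time $t$, and their contribution is exactly what lifts $\mu_\Pi\{f(t)\geq 0\}$ from $\eee^{-\theta_+t}$ (the surviving time-$0$ particles alone) up to $1$. Your computation survives because every $\mu_\Pi$-quantity you use is read off from~\eqref{eq:intensity_kuznetsov}, which describes the \emph{full} intensity including the birth and killing paths; only the event $\{f(0)\geq 0,\,f(t)\geq 0\}$ is genuinely restricted to particles of type $V_i$ that survive throughout $[0,t]$, and that restriction is what the subprobability kernel $q^+_t$ encodes.
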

The proof of Proposition~\ref{prop:extremal_corr} will be given in Section~\ref{subsec:proof_extremal_corr}.
Note that the existence of a mixed moving maxima representation for $\eta$
as shown in~\citet{engelke_ivanovs} and Section~\ref{subsec:MMM} implies that
$\eta$ is mixing. According to~\cite[Thm.~3.4]{sto2008} the latter is also equivalent
to $\lim_{t\to+\infty} \rho(t) = 0$.


\subsection{Extremal index in the spectrally negative case}\label{subsec:extremal_index}
An important quantity associated with a stationary max-stable process $\eta$ is its extremal index; see~\cite[p.~67]{lea1983}. By the max-stability of $\eta$, for every $T>0$ we can find $\Theta(T)>0$ such that
\begin{equation}\label{eq:Theta_T_def}
\P\left[\sup_{t\in [0,T]}\eta(t) - \log \Theta(T)\leq x\right] = \exp(-\eee^{-x}),
\quad
x\in\R.
\end{equation}
The \textit{extremal index} of $\eta$  is defined as the limit
\begin{equation}\label{eq:Theta_def}
\Theta:=\lim_{T\to \infty} \frac{\Theta(T)}{T}.
\end{equation}
In the next theorem we compute the extremal index of a L\'evy--Brown--Resnick process $\eta$ in the case when the driving L\'evy process $\xi$ is spectrally negative. Recall that $\xi$ is called spectrally negative if it has no positive jumps, or, equivalently, if the L\'evy measure of $\xi$ is concentrated on the negative half-axis. For a spectrally negative L\'evy process $\xi$, the function
$$
\psi(u) := \log \E \eee^{u\xi(1)}
$$
is finite for all $u\geq 0$; see~\cite[Chapter VII]{ber1996}.  Let $\psi^{-1}(0)$ be the largest solution of $\psi(u)=0$.  The function $\psi:[\psi^{-1}(0), \infty)\to [0,\infty)$ is strictly increasing and continuous, and the inverse function is denoted by $\psi^{-1}$.
\begin{theo}\label{theo:extremal_index}
Let $\eta$ be a L\'evy--Brown--Resnick process generated by a L\'evy process $\xi$ that has no positive jumps. Then, the extremal index of $\eta$ is given by
\begin{equation}\label{eq:Theta}
\Theta =
\begin{cases}
\psi'(1), &\text{if } \theta_+=0,\\
\frac{\psi^{-1}(\theta_-)}{\psi^{-1}(\theta_-)-1}\theta_+, &\text{if } \theta_+>0.
\end{cases}
\end{equation}
\end{theo}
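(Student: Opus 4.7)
The plan is to reduce the computation of $\Theta(T)$ to an evaluation of $\mu_\Pi$ on a half-space and then analyze this quantity asymptotically via the spectrally negative structure of $\xi$. From the exponent measure identity~\eqref{eq:fidi_eta_exponent measure} and the scaling relation $\mu_\Pi\{f:\sup f>x\}=\eee^{-x}\mu_\Pi\{f:\sup f>0\}$ (immediate from the one-dimensional marginal $\eee^{-u}\,\dd u$ in~\eqref{eq:intensity_kuznetsov}), comparison with~\eqref{eq:Theta_T_def} gives $\Theta(T)=\mu_\Pi\{f\in\bar D\colon\sup_{t\in[0,T]}f(t)>0\}$. Decomposing $\Pi$ along (A1)--(A3)---paths present at $0$, paths born after $0$, and paths killed before $0$---the third group contributes nothing on $[0,T]$, while integration of the spatial starting coordinate against $\eee^{-v}\dd v$ via $\int_\R\eee^{-v}\1_{v+M>0}\,\dd v=\eee^{M}$ collapses the birth positions and yields
\begin{equation*}
\Theta(T)=\E\bigl[\eee^{S_T^+}\bigr]+\theta_+\int_0^T\E\bigl[\eee^{S_\tau^+}\bigr]\,\dd\tau,\qquad S_\tau^+:=\sup_{u\in[0,\tau]}L^+(u),
\end{equation*}
where $L^+(u)=-\infty$ beyond its independent $\mathrm{Exp}(\theta_-)$ killing time.

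For $\theta_+>0$ the key ingredient is the classical first-passage identity for spectrally negative L\'evy processes: if $\tau_x:=\inf\{t\geq 0\colon\xi(t)>x\}$, then $\E[\eee^{-q\tau_x}]=\eee^{-\psi^{-1}(q)x}$, so integrating against the independent exponential killing time gives $\P[\sup_{u\geq 0}L^+(u)>x]=\eee^{-\psi^{-1}(\theta_-)x}$. Since $\theta_-=\psi(1)+\theta_+$ and $\psi$ is strictly increasing on $[\psi^{-1}(0),\infty)$, one has $\psi^{-1}(\theta_-)>1$, whence $\E[\eee^{\sup_{u\geq 0}L^+(u)}]=\psi^{-1}(\theta_-)/(\psi^{-1}(\theta_-)-1)<\infty$. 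The finite first summand of $\Theta(T)$ is therefore $O(1)$, and monotone convergence gives $\E[\eee^{S_\tau^+}]\nearrow\psi^{-1}(\theta_-)/(\psi^{-1}(\theta_-)-1)$ as $\tau\to\infty$; Ces\`aro's theorem applied to the second summand then yields $\Theta(T)/T\to\theta_+\psi^{-1}(\theta_-)/(\psi^{-1}(\theta_-)-1)$.

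The case $\theta_+=0$ is the main obstacle, since then $\psi^{-1}(\theta_-)=\psi^{-1}(\psi(1))=1$ and $\E[\eee^{\sup_{u\geq 0}L^+(u)}]=\infty$, so the previous route collapses. The plan is an Esscher transform argument. Writing $\E[\eee^{S_T^+}]=1+\int_0^\infty\eee^x\P[S_T^+>x]\,\dd x$ with $\P[S_T^+>x]=\E[\1_{\tau_x\leq T}\eee^{-\psi(1)\tau_x}]$, I introduce the tilted measure $\dd\widehat{\P}/\dd\P|_{\mathcal F_t}=\eee^{\xi(t)-\psi(1)t}$ and apply optional stopping at $\tau_x$, using $\xi(\tau_x)=x$ (valid because $\xi$ is spectrally negative), to obtain $\P[S_T^+>x]=\eee^{-x}\widehat{\P}[\tau_x\leq T]$ and hence
\begin{equation*}
\Theta(T)=1+\int_0^\infty\widehat{\P}[\tau_x\leq T]\,\dd x=1+\widehat{\E}\bigl[\sup_{t\in[0,T]}\xi(t)\bigr].
\end{equation*}
Under $\widehat{\P}$ the process $\xi$ is again spectrally negative with Laplace exponent $\widehat\psi(u)=\psi(u+1)-\psi(1)$ and positive drift $\widehat\psi'(0)=\psi'(1)$, so by time-reversal of L\'evy increments one has $\widehat{\E}[\sup_{t\leq T}\xi(t)]=\psi'(1)T+\widehat{\E}\bigl[-\inf_{s\leq T}\tilde\xi(s)\bigr]$ for an independent copy $\tilde\xi$. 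The delicate point is showing the second term is $O(1)$: here I would invoke the Wiener--Hopf identity $\widehat{\E}[\eee^{q\inf_{s\geq 0}\tilde\xi(s)}]=\psi'(1)q/\widehat\psi(q)$, whose derivative at $q=0$ gives $\widehat{\E}[-\inf_{s\geq 0}\tilde\xi(s)]=\psi''(1)/(2\psi'(1))$, finite because $\psi$ is finite on all of $[0,\infty)$ for spectrally negative $\xi$ and hence smooth on the interior $(0,\infty)$. Monotone convergence in $T$ then gives $\widehat{\E}[\sup_{t\leq T}\xi(t)]=\psi'(1)T+O(1)$, and $\Theta=\psi'(1)$ follows.
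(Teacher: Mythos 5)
Your proof is correct, and the skeleton is the same as the paper's: both arguments reduce the problem to the identity $\Theta(T)=f(T)+\theta_+\int_0^T f(s)\,\dd s$ with $f(t)=\E\,\eee^{\sup_{u\in[0,t]}L^+(u)}$ (the paper derives it by conditioning on the three particle types exactly as you do), and in the case $\theta_+>0$ both identify $\sup_{s>0}L^+(s)$ as $\mathrm{Exp}(\psi^{-1}(\theta_-))$-distributed — the paper by citing Bertoin's Corollary~2, you by integrating the first-passage Laplace transform $\E\,\eee^{-q\tau_x}=\eee^{-\psi^{-1}(q)x}$ against the killing time, which is the same fact. Where you genuinely diverge is the case $\theta_+=0$: the paper computes the Laplace transform $\int_0^\infty f(T)\eee^{-\lambda T}\dd T=\psi^{-1}(\lambda+\theta_-)/(\lambda(\psi^{-1}(\lambda+\theta_-)-1))\sim\psi'(1)/\lambda^2$ via the exponentially killed supremum and then invokes a Tauberian theorem (using the monotonicity of $f$) to get $f(T)\sim\psi'(1)T$. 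You instead Esscher-tilt at $u=1$, use the absence of overshoot to write $f(T)=1+\widehat{\E}\,\sup_{t\le T}\xi(t)$, and control the error term by the duality $\sup_{t\le T}\xi-\xi(T)\eqd-\inf_{t\le T}\xi$ together with the Wiener--Hopf formula $\widehat{\E}\,\eee^{q\inf_{s\ge0}\xi(s)}=\widehat\psi'(0)q/\widehat\psi(q)$ for the tilted (spectrally negative, positively drifting) process. Both routes are sound; yours avoids Tauberian machinery, is more explicitly probabilistic, and actually yields the sharper asymptotic $\Theta(T)=\psi'(1)T+O(1)$ with an explicit constant $1+\psi''(1)/(2\psi'(1))$ for the correction, whereas the paper's argument only gives $\Theta(T)\sim\psi'(1)T$ (which is all that is needed). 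The only points worth making fully explicit in a write-up are the justification of optional stopping for the exponential martingale at $\tau_x\wedge T$ and the strict positivity $\psi'(1)>0$ (which follows from strict convexity of $\psi$ for non-deterministic $\xi$ together with $\psi(1)=\theta_-\ge 0$), since the Wiener--Hopf identity you invoke requires the tilted process to drift to $+\infty$.
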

The proof of Theorem~\ref{theo:extremal_index} will be given in Section~\ref{subsec:proof_extremal_index}.

\subsection{Extremes of independent L\'evy processes}\label{subsec:extremes_levy}
The original Brown--Resnick process $\eta_{\text{BR}}$, see~\eqref{BR}, appeared as a limit of pointwise maxima of independent Brownian motions, after appropriate normalization. Let $B_1,B_2,\ldots$ be i.i.d.\ standard Brownian motions. Let $u_n$ be any sequence such that $1-\Phi(u_n)\sim 1/n$, where $\Phi$ is the standard normal distribution function. \citet{bro1977} proved that weakly on the space $C(\R)$,
\begin{equation}\label{eq:brown_resnick_res1}
\left\{\sqrt{2\log n} \left(\max_{i=1,\ldots,n} B_i\left(1+ \frac{t}{2\log n}\right) - u_n\right)\colon t\in \R\right\}
\toweak
\left\{\eta_{\text{BR}}(t) + \frac t2\colon t\in \R\right\}.
\end{equation}
To make the left-hand side of~\eqref{eq:brown_resnick_res1} defined for every $t\in\R$, we can extend $B_i$ to the negative half-axis in an arbitrary way, for example by requiring that $B_i(s)=0$ for $s<0$. The space $C(\R)$ is endowed with the topology of uniform convergence on compact intervals so that the weak convergence on $C(\R)$ is equivalent to the weak convergence on $C[-T,T]$ for every $T\geq 0$. See also \cite{kab2009, eng2014a} for other classes of processes whose maxima converge to $\eta_{\text{BR}}$.

By using the self-similarity of the Brownian motion, we obtain that weakly on $C(\R)$,
\begin{equation}\label{eq:brown_resnick_res2}
\left\{ \max_{i=1,\ldots,n} B_i (2\log n + t) - u_n \sqrt{2\log n}\colon t\in\R\right\}
\toweak
\left\{\eta_{\text{BR}}(t) + \frac t2\colon t\in\R\right\}.
\end{equation}
Our aim is to generalize~\eqref{eq:brown_resnick_res2} to L\'evy processes.
Suppose that $\xi_1,\xi_2,\ldots$ are independent copies of a non-deterministic L\'evy process $\{\xi(t)\colon  t\geq 0\}$ such that the distribution of $\xi(1)$ is non-lattice and
\begin{align}\label{psi_1}
\psi(u) := \log \E \eee^{u\xi(1)} < \infty, \text{ for all } u \in [0,u_\infty),
\end{align}
where $u_\infty \in (0,+\infty]$ is maximal with this property. Let $s_1,s_2,\ldots$ be a sequence of non-negative real numbers such that
\begin{align}\label{growth}
\lambda := \lim_{n\to\infty} \frac{\log n}{s_n}  \in (0, \infty).
\end{align}
We are interested in the functional limit behavior of the process
\begin{align*}
  M_n(t) := \max_{i=1,\dots, n} \xi_i(s_n+t).
\end{align*}
To state our limit theorem on $M_n(t)$, we need to introduce some notation. Note that $\psi(0)=0$ and that the function $\psi'$ is a strictly increasing and infinitely differentiable bijection between $(0,u_\infty)$ and $(\beta_0,\beta_{\infty})$, where
$$
\beta_0=\lim_{u\downarrow 0}\psi'(u)=\E \xi(1)\in \R\cup\{-\infty\},
\quad
\beta_{\infty} = \lim_{u\uparrow u_\infty}\psi'(u) \in \R\cup\{+\infty\}.
$$
The information function  $I$ is defined as the Legendre--Fenchel transform
of $\psi$, that is
\begin{equation}\label{eq:I_def}
I(\psi'(u)) = u\psi'(u) - \psi(u), \quad u \in (0,u_\infty).
\end{equation}
Since every $\beta\in (\beta_0,\beta_\infty)$ can be represented as $\beta =\psi'(u)$ for some $u\in (0,u_\infty)$, the function $I$ is defined on the interval $(\beta_0,\beta_\infty)$. Let $\lambda_\infty=\lim_{\beta\uparrow \beta_\infty} I(\beta)$, so that $I$ is a bijection between $(\beta_0,\beta_{\infty})$ and $(0,\lambda_\infty)$. Suppose additionally that $\lambda\in (0,\lambda_\infty)$ and denote by $\theta\in (0, u_\infty)$ the unique solution to $I(\psi'(\theta)) = \lambda$. Define a normalizing sequence $b_n$ by
\begin{equation}\label{eq:b_n_def}
b_n = I^{-1} (\lambda_n ) s_n \sim \psi'(\theta) s_n
\text{ with }
\lambda_n := \frac{\log n - \log(\theta\sqrt{2\pi \psi''(\theta)s_n})}{s_n}\ton \lambda.
\end{equation}
Let $L^+$ be the L\'evy process defined by $L^+(t) = \theta \xi(t) - \psi(\theta) t$, $t\geq 0$. Note that $L^+$ satisfies~\eqref{eq:psi=0}. Let $L$  be the corresponding two-sided process as in~\eqref{twosided} and~\eqref{eq:tilting}.
\begin{theo}\label{theo:BR_for_levy}
We have the following weak convergence of stochastic processes on the Skorokhod space $D(\R)$:
\begin{align}\label{conv}
\left\{ \max_{i=1,\ldots,n} \xi_i (s_n + t) - b_n \colon t\in\R\right\}
\toweak
\left\{\frac 1 \theta \eta(t) + \frac{\psi(\theta)}{\theta} t \colon t\in\R\right\}.
\end{align}
where $\eta$ is the L\'evy--Brown--Resnick process  corresponding
to $L$; see~\eqref{eta_def}.
\end{theo}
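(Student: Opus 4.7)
The approach is to prove convergence of the Poisson point process of rescaled spectral paths to the spectral PPP of $\eta$, and then pass this convergence through the pointwise-maximum functional with tightness on compact time intervals. Set $L^+(t) := \theta \xi(t) - \psi(\theta) t$, so that $\E \eee^{L^+(1)} = 1$ by construction; the right-hand side of~\eqref{conv} then equals, by~\eqref{eta_def}, $\theta^{-1}\max_i (U_i + L_i(t)) + \theta^{-1}\psi(\theta) t$. Writing $\tilde{\xi}_{n,i}(t) := \theta(\xi_i(s_n + t) - b_n) - \psi(\theta) t$, the claim~\eqref{conv} becomes $\max_i \tilde\xi_{n,i}(\cdot) \toweak \eta(\cdot)$ in $D(\R)$. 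By the independent-increments property of $\xi$, the increments $\tilde\xi_n(t_j) - \tilde\xi_n(t_{j-1})$ at ordered times $t_1 < \ldots < t_k$ are distributed as $L^+(t_j - t_{j-1})$ and are independent of $\tilde\xi_n(t_1)$, so the joint intensity of $N_n := \sum_i \delta_{\tilde\xi_{n,i}}$ factors as
\[
n\, \P(\tilde\xi_n(t_1) \in \dd x_1) \cdot q_{t_2 - t_1}^+(x_1, \dd x_2) \cdots q_{t_k - t_{k-1}}^+(x_{k-1}, \dd x_k),
\]
which matches the Kuznetsov intensity~\eqref{eq:intensity_kuznetsov} as soon as the one-dimensional factor converges to $\eee^{-x_1}\dd x_1$.

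The crux is therefore the uniform local-limit statement $n\, \P(\tilde\xi_n(t_1) \in \dd x_1) \to \eee^{-x_1}\dd x_1$ for $x_1$ in compact sets. Translated back to $\xi$, this is Stone's non-lattice local limit theorem in the Cram\'er regime $b_n/s_n \to \psi'(\theta) =: \beta$: since $\xi(1)$ is non-lattice and $\psi$ is finite near $\theta$,
\[
\P(\xi(s_n + t_1) \in b_n + \dd z) \sim \frac{\eee^{-\theta (b_n + z) + \psi(\theta)(s_n + t_1)}}{\sqrt{2\pi (s_n + t_1) \psi''(\theta)}}\,\dd z
\]
uniformly for $z$ in bounded sets. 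The calibration of $\lambda_n$ in~\eqref{eq:b_n_def} is engineered precisely so that $n \eee^{-\theta b_n + \psi(\theta) s_n}/\sqrt{2\pi s_n \psi''(\theta)} \to \theta$, and the change of variable $z \leftrightarrow (x_1 + \psi(\theta) t_1)/\theta$ then delivers the desired $\eee^{-x_1}\dd x_1$. Combined with the Markov factors $q_{t_j - t_{j-1}}^+$, this shows that the $k$-point intensities of $N_n$ converge to those of the Kuznetsov measure, and standard point-process convergence criteria (convergence of intensities on relatively compact cylinder sets, with the limit free of fixed atoms) imply that $N_n$ converges to the spectral PPP $\Pi$ of the L\'evy--Brown--Resnick process driven by $L^+$.

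Finite-dimensional convergence of $\max_i \tilde\xi_{n,i}$ to $\eta$ follows from this PPP convergence via continuous mapping applied to the maximum functional, using Proposition~\ref{prop_finite} which guarantees that only finitely many spectral paths contribute to $\eta$ on any compact interval. For tightness in $D[-T, T]$ we decompose the $n$ particles into \emph{high} ones with $\tilde\xi_{n,i}(0) \geq -K$ and the rest: by the previous step the count of high paths is asymptotically Poisson with bounded mean, and each such path is tight on $[-T,T]$ as a single L\'evy-process segment. The low paths contribute less than any fixed level to $\sup_{t \in [-T,T]} \max_i \tilde\xi_{n,i}(t)$ with probability tending to $1$ as $K\to\infty$, which is obtained by applying the same Stone-type analysis to the running supremum of $\xi$ over a $2T$-window (whose Cram\'er asymptotic differs from the marginal only by a bounded multiplicative constant). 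Sending $K \to \infty$ after $n\to\infty$ completes tightness and identifies the limit.

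The main obstacle is the local-limit step: obtaining Stone's theorem in the L\'evy-process setting with a remainder uniform in $z$ on compact sets, uniform in $t_1$ on compact sets, and along the sequence $b_n$ with $b_n/s_n \to \beta$. The classical Stone theorem for random walks and its L\'evy-process adaptations provide precisely this, but care is needed because the bulk of the exponential decay is absorbed into $b_n$ while the sub-exponential prefactor $1/\sqrt{s_n}$ is what balances with the defining correction in $\lambda_n$. The secondary technical difficulty is uniform control of the low-path contributions to the Skorokhod modulus on $[-T,T]$, handled by the running-supremum version of the same Cram\'er estimate.
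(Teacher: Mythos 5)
Your outline is correct and follows essentially the same route as the paper: a precise one-dimensional asymptotic calibrated by $\lambda_n$ (you via Stone's local limit theorem, the paper via the Bahadur--Rao--Petrov tail expansion, which is all that the point-process convergence criterion actually requires), the exact identification of the forward increments with $L^+$ so that the higher-order intensities factor through $q_t^+$, a high/low truncation of the paths, and a running-supremum (Willekens-type) estimate to show the low paths never reach a fixed level. The one place where you should borrow the paper's device is the two-sided tightness step: a path selected because it is high at $t=0$ is a \emph{conditioned} Lévy segment on $[-T,0]$, and the clean fix --- used in the paper's Step~6 and already implicit in your own anchoring of the intensities at the leftmost time $t_1$ --- is to re-anchor the whole argument at time $s_n-T$ (equivalently, truncate on the value at $-T$ rather than at $0$), so that every retained path is an honest forward $L^+$ segment on all of $[-T,T]$.
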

In order to make the left-hand side of~\eqref{conv} well-defined for all $t\in\R$, we define, say, $\xi_i(s)=0$ for $s<0$. The proof of Theorem~\ref{theo:BR_for_levy} will be given in Section~\ref{subsec:proo_brown_resnick_levy}. The Skorokhod space is endowed with the usual $J_1$-metric; see~\cite[Section~16]{billingsley_book}.
Restricting Theorem~\ref{theo:BR_for_levy} to $t=0$ we recover a  known result due to~\citet{ivchenko} and~\citet{durrett}:
\begin{equation}\label{eq:durrett_ivchenko}
\max_{i=1,\ldots,n} \xi_i (s_n) - b_n \todistr \exp(-\eee^{-\theta x}).
\end{equation}
Theorem~\ref{theo:BR_for_levy} is a functional version of~\eqref{eq:durrett_ivchenko}. Functional limit theorems for sums of geometric L\'evy processes of the form $\eee^{\beta \xi_i(s_n+t)}$ were obtained in~\cite{kabluchko_FCLT} with limits being certain stationary stable or Gaussian processes. Theorem~\ref{theo:BR_for_levy} can be viewed as the limiting case of the results of~\cite{kabluchko_FCLT} as $\beta\to +\infty$.

\begin{rem}
The lattice assumption on $\xi(1)$ cannot be removed. If $\xi(1)$ is lattice, then Theorem~\ref{theo:BR_for_levy} breaks down and instead we have weak convergence along certain subsequences of $n$ to a topological circle of limiting processes as in~\cite{lifshits} or~\cite{komlos_tusnady}.
\end{rem}

\subsection{Extremes of independent totally skewed \texorpdfstring{$\alpha$}{alpha}-stable  L\'evy processes}\label{subsec:extremes_levy_stable}
In this section we will generalize the results of ~\citet{bro1977} to totally skewed $\alpha$-stable L\'evy processes.
To this end, we will combine Theorem~\ref{theo:BR_for_levy} with the scaling property of these processes. Let us first recall some definitions related to $\alpha$-stable processes (cf.\ \cite{sam1994}).
A real-valued random variable $X$ is said to have an
$\alpha$-stable distribution $S_\alpha(\sigma,\beta,\mu)$ with parameters $\alpha\in (0,2]$, $\sigma \geq 0$, $\beta \in [-1,1]$
and $\mu\in \R$ if its characteristic function has the form
\begin{align*}
\E \exp(i\theta X)
=
\begin{cases}
\exp\left\{ -\sigma^\alpha |\theta|^\alpha(1 - i\beta \sign(\theta) \tan (\pi\alpha/2)) + i\mu\theta\right\}, &\alpha\neq 1,\\
\exp\left\{ -\sigma |\theta|(1 + \frac {2} \pi i \beta \sign(\theta)  \log |\theta|) + i\mu\theta\right\}, &\alpha=1,
\end{cases}
\end{align*}
for all $\theta \in \R$. In general, $\alpha$-stable distributions possess heavy power-law tails and are thus in the max-domain of attraction of the Fr\'echet (rather than Gumbel) distribution. An exception, which we will focus on, is the case of $\alpha$-stable random
variables that are totally skewed to the left, that is, $\beta = -1$.

Let $X$ be a random variable with distribution $S_\alpha(1,-1,0)$. It is known that in the case $\alpha\in [1,2]$, $X$ has positive density on the whole real line, whereas in the  case $\alpha\in (0,1)$ the density is concentrated on  the negative half-line. Asymptotic formulas for the right tail of $X$ near its right endpoint $x^*$ (which is $+\infty$ for $\alpha\in[1,2]$ an $0$ for $\alpha\in (0,1)$) are well-known; see~\cite{alb1993} or~\cite[Eq. 1.2.11]{sam1994}. For $\alpha\neq 1$ the tail asymptotics has the form
\begin{equation}\label{eq:asympt_tail_stable}
  \P[X>x] \sim  A_\alpha x^{-\frac{\alpha}{2(\alpha-1)}} \exp\{ -B_\alpha x^{\frac{\alpha}{\alpha-1}} \},
  \quad x\uparrow x^*,
\end{equation}
with certain explicit constants $A_{\alpha}$ and $B_{\alpha}$. Suppose now that $X_1,X_2,\ldots$, are i.i.d.\ copies of $X\sim S_{\alpha}(1,-1,0)$, where $\alpha\in (0,2]$.
Using~\eqref{eq:asympt_tail_stable} and standard asymptotic calculations, see Theorem 3.3.26 in~\cite{emb1997}, one can obtain that there is a sequence $b_{n,\alpha}$ (see~\eqref{eq:b_n_alpha_def}, below) and a number $\theta_\alpha>0$ (see~\eqref{eq:theta_alpha}, below) such that
\begin{equation}\label{conv_max}
(\log n)^{\frac 1 \alpha} \max_{i=1,\dots,n} X_i - b_{n,\alpha}
\todistr
\exp(-\eee^{-\theta_\alpha x}).
\end{equation}

We will obtain a functional version of~\eqref{conv_max}. For $\alpha\in (0,2]$ consider a L\'evy process $\{\xi_{\alpha}(t)\colon t\geq 0\}$ such that the distribution of $\xi_\alpha(t)$ is $S_\alpha(t,-1,0)$.
It is well known, see Proposition~1.2.12 in~\cite{sam1994}, that for $u\geq 0$ we have
$$
\psi_{\alpha}(u):= \log \E \eee^{u\xi_\alpha(1)} =
\begin{cases}
c_{\alpha} u^{\alpha}, &\alpha\neq 1,\\
c_{1} u\log u, &\alpha=1,
\end{cases}
\text{ with }
c_{\alpha}
=
\begin{cases}
-\frac 1{\cos \frac{\alpha \pi}2}, &\alpha\neq 1,\\
\frac{2}{\pi}, &\alpha=1.
\end{cases}
$$
Note that $c_{\alpha}>0$ for $\alpha\in [1,2]$, while $c_{\alpha}<0$ for $\alpha\in (0,1)$.
Let us apply Theorem~\ref{theo:BR_for_levy} to $\xi_\alpha$.
A straightforward computation yields that the information function $I=I_{\alpha}$ from~\eqref{eq:I_def} is given by
$$
I_{\alpha}(\beta)=
\begin{cases}
(\alpha-1) \left(\frac{\beta}{\alpha^{\alpha} c_{\alpha}}\right)^{\frac 1 {\alpha-1}} \beta, &\alpha\neq 1,\\
c_1 \eee^{\frac{\beta}{c_1}-1}, &\alpha=1,
\end{cases}
$$
where the interval $(\beta_0,\beta_\infty)$ on which $I_{\alpha}$ is defined is $(0,+\infty)$ in the case $\alpha\in (1,2]$,  $(-\infty,0)$ in the case $\alpha\in (0,1)$, and $(-\infty,+\infty)$ in the case $\alpha=1$.
Take $s_n=\log n$ (so that $\lambda=1$). We easily compute that the solution to $I_{\alpha}(\psi'_{\alpha}(\theta_\alpha))=1$ is given by
\begin{equation}\label{eq:theta_alpha}
\theta_\alpha =
\begin{cases}
((\alpha-1) c_{\alpha})^{-\frac 1\alpha}, &\alpha\neq 1,\\
\frac \pi 2,&\alpha=1.
\end{cases}
\end{equation}
Applying the Taylor expansion of $I_{\alpha}^{-1}$ to~\eqref{eq:b_n_def} and discarding the $o(1)$ terms, we obtain that the normalizing sequence $b_n=b_{n,\alpha}$ is given by
\begin{equation}\label{eq:b_n_alpha_def}
b_{n,\alpha}
=
\begin{cases}
\frac 1 {\theta_\alpha} \left(\frac{\alpha}{\alpha-1} \log n - \frac 12 \log (2\pi \alpha \log n)\right), &\alpha\neq 1,\\
 \left(\frac 2 \pi \log \frac {\pi\eee} {2}\right) \log n -  \frac 1 {\pi} \log (2\pi \log n), &\alpha=1.
\end{cases}
\end{equation}

Let now $\xi_{1,\alpha},\xi_{2,\alpha},\ldots$ be i.i.d.\ copies of $\xi_\alpha$. Applying Theorem~\ref{theo:BR_for_levy} we obtain that weakly on the Skorokhod space $D(\R)$ it holds that
\begin{equation}\label{eq:brown_resnick_stable_Levy}
\left\{\max_{i=1,\ldots,n} \xi_{i,\alpha} \left(\log n +  t\right) - b_{n,\alpha} \colon t\in\R\right\}
\toweak
\left\{\frac 1 {\theta_{\alpha}} \eta_{\alpha}(t) + \frac{\psi(\theta_\alpha)}{\theta_\alpha} t \colon t\in\R\right\},
\end{equation}
where $\eta_{\alpha}$ is a L\'evy--Brown--Resnick process defined as in Section~\ref{subsec:levy_brown_resnick} with
\begin{equation}\label{eq:L^+_alpha}
L^+(t) = \theta_{\alpha} \xi_{\alpha}(t) - \psi(\theta_{\alpha}) t,\;\;\; t\geq 0.
\end{equation}
Note that in the case $\alpha=2$, with $\xi_{i,\alpha}(t)= B_i(2t)$, we recover Brown and Resnick's result~\eqref{eq:brown_resnick_res2}.

Using the self-similarity of $\xi_\alpha$ we can also generalize~\eqref{eq:brown_resnick_res1}. Let us denote the limiting process in~\eqref{eq:brown_resnick_stable_Levy} by $\tilde \eta_\alpha$:
\begin{equation}\label{eq:tilde_eta_alpha}
\tilde \eta_{\alpha}(t)
:=
\frac 1 {\theta_\alpha} \eta_\alpha(t) + \frac{\psi(\theta_\alpha)}{\theta_\alpha} t.
\end{equation}
\begin{theo}\label{theo:BR_for_alpha_stable_levy}
For $\alpha\neq 1$, we have the following weak convergence of stochastic processes on the Skorokhod space $D(\R)$:
\begin{equation}\label{eq:brown_resnick_stable_near1}
\left\{ (\log n)^{\frac 1\alpha}\max_{i=1,\ldots,n} \xi_{i,\alpha} \left(1 +  \frac{t}{\log n}\right) - b_{n,\alpha} \colon t\in\R\right\}
\toweak
\left\{\tilde \eta_{\alpha}(t) \colon t\in\R\right\}.
\end{equation}
For $\alpha=1$ we have, weakly on $D(\R)$,
\begin{equation}\label{eq:brown_resnick_stable_near1_alpha1}
\left\{ \log n \max_{i=1,\ldots,n} \xi_{i,1} \left(1 +  \frac{t}{\log n}\right) - \tilde b_{n,1}(t) \colon t\in\R\right\}
\toweak
\left\{\tilde \eta_{1}(t) \colon t\in\R\right\},
\end{equation}
where $\tilde b_{n,1}(t)=b_{n,1} + \frac 2 \pi (\log n) (\log\log n) t$.
\end{theo}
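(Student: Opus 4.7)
The plan is to deduce \eqref{eq:brown_resnick_stable_near1} and \eqref{eq:brown_resnick_stable_near1_alpha1} directly from the previously established convergence \eqref{eq:brown_resnick_stable_Levy} by invoking the self-similarity of the totally skewed $\alpha$-stable L\'evy process $\xi_\alpha$. The key observation is that, because self-similarity applies jointly to the i.i.d.\ family $(\xi_{i,\alpha})_{i\in\N}$, the time-rescaled maxima appearing on the left-hand side of \eqref{eq:brown_resnick_stable_near1}--\eqref{eq:brown_resnick_stable_near1_alpha1} are equidistributed (as processes) with the linearly-shifted maxima already treated in \eqref{eq:brown_resnick_stable_Levy}.

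For $\alpha \neq 1$, the process $\xi_\alpha$ is $1/\alpha$-self-similar, so for every $c > 0$ one has, jointly in $i$ and $s$,
\[
\bigl\{\xi_{i,\alpha}(cs) \colon s \geq 0,\, i \in \N\bigr\} \eqfdd \bigl\{c^{1/\alpha}\,\xi_{i,\alpha}(s) \colon s \geq 0,\, i \in \N\bigr\}.
\]
Setting $c = \log n$ and $s = 1 + t/\log n$ (which is positive for all $n$ large enough and $t$ in any fixed compact interval) shows that the process on the left-hand side of \eqref{eq:brown_resnick_stable_near1} has the same law as $\{\max_i \xi_{i,\alpha}(\log n + t) - b_{n,\alpha} \colon t\in\R\}$. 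The claimed weak convergence to $\tilde\eta_\alpha$ is then exactly \eqref{eq:brown_resnick_stable_Levy} with $s_n = \log n$ (so that $\lambda = 1$).

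For $\alpha = 1$, the scaling relation is twisted by a logarithmic drift. A direct Laplace-transform computation using $\psi_1(u) = (2/\pi)\,u\log u$ yields, for every $c > 0$,
\[
\bigl\{\xi_1(cs) \colon s \geq 0\bigr\} \eqfdd \bigl\{c\,\xi_1(s) - \tfrac{2}{\pi}\,cs\log c \colon s \geq 0\bigr\},
\]
both sides being L\'evy processes with matching one-dimensional laws at $s = 1$. Specialising to $c = \log n$ and $s = 1 + t/\log n$ gives, jointly in $i$ and $t$,
\[
(\log n)\,\xi_{i,1}\!\left(1 + \tfrac{t}{\log n}\right) \eqfdd \xi_{i,1}(\log n + t) + \tfrac{2}{\pi}(\log n + t)\log\log n.
\]
Since the correction is deterministic, it factors through the maximum over $i$ and can be absorbed into the centering; the stated convergence then follows from \eqref{eq:brown_resnick_stable_Levy} applied at $\alpha = 1$.

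The argument is essentially bookkeeping once the scaling identities are in place. The only delicate step is the $\alpha = 1$ case, in which the $\log c$ drift correction produces an additional term depending on both $n$ and $t$ that must be carefully combined with $b_{n,1}$ to produce the centering $\tilde b_{n,1}(t)$; beyond that, the weak convergence on $D(\R)$ is inherited directly from \eqref{eq:brown_resnick_stable_Levy}.
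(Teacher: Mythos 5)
Your approach is the same as the paper's: for $\alpha\neq 1$ you invoke the $1/\alpha$-self-similarity of $\xi_\alpha$ applied jointly to the i.i.d.\ family, and for $\alpha=1$ the twisted scaling relation~\eqref{eq:self_similar_alpha_1}; this is exactly how the paper deduces the theorem from~\eqref{eq:brown_resnick_stable_Levy}. The $\alpha\neq 1$ half of your argument is correct and complete (and you rightly note that the identity in law holds on any compact $t$-interval once $n$ is large enough that $1+t/\log n>0$).

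The $\alpha=1$ half has a genuine gap at the final ``bookkeeping'' step. Your scaling identity
\begin{equation*}
(\log n)\,\xi_{i,1}\!\left(1+\tfrac{t}{\log n}\right) \eqd \xi_{i,1}(\log n+t)+\tfrac{2}{\pi}(\log n+t)\log\log n
\end{equation*}
is correct, but the deterministic correction it produces is $\tfrac{2}{\pi}(\log n+t)\log\log n$, whereas the centering in the statement is $\tilde b_{n,1}(t)-b_{n,1}=\tfrac{2}{\pi}(\log n)(\log\log n)\,t$. These are not equal, and their difference,
\begin{equation*}
\tfrac{2}{\pi}\bigl[(\log n)(\log\log n)(1-t)+t\log\log n\bigr],
\end{equation*}
diverges for every fixed $t\neq 1$ (already at $t=0$ it equals $\tfrac{2}{\pi}(\log n)(\log\log n)$). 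So the convergence~\eqref{eq:brown_resnick_stable_near1_alpha1} with $\tilde b_{n,1}(t)$ as defined does \emph{not} follow from your displayed identity together with~\eqref{eq:brown_resnick_stable_Levy}; what does follow is the same convergence with centering $b_{n,1}+\tfrac{2}{\pi}(\log n+t)\log\log n$. The sentence ``can be absorbed into the centering \ldots\ to produce $\tilde b_{n,1}(t)$'' asserts a match that you never verify and that in fact fails. You should either carry out this comparison explicitly and record the centering your identity actually yields, or flag the mismatch with the stated $\tilde b_{n,1}(t)$ (the same issue is visible in the introductory claim~\eqref{conv_max} for $\alpha=1$: since $(\log n)X\eqd \xi_1(\log n)+\tfrac{2}{\pi}(\log n)\log\log n$, the constant $b_{n,1}$ of~\eqref{eq:b_n_alpha_def} cannot simultaneously center $\max_i\xi_{i,1}(\log n)$ and $(\log n)\max_i X_i$). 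As it stands, the $\alpha=1$ part of your proof does not establish the statement as written.
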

\begin{proof}
It is well known~\cite[Section 3.1]{sam1994} that for $\alpha\neq 1$, the process $\xi_{\alpha}$ is $1/\alpha$-self-similar, that is  for all $c>0$,
\begin{equation}\label{eq:self_similar_alpha_neq_1}
\{\xi_{\alpha}(ct)\colon t\geq 0\} \eqd \{c^{1/\alpha}\xi_{\alpha}(t)\colon t\geq 0\}.
\end{equation}
Combining~\eqref{eq:brown_resnick_stable_Levy} with the self-similarity, we obtain~\eqref{eq:brown_resnick_stable_near1}.
In the case $\alpha=1$ the self-similarity breaks down and instead one has
\begin{equation}\label{eq:self_similar_alpha_1}
\{\xi_{1}(ct)\colon t\geq 0\} \eqd \left\{c \xi_1(t) - \frac {2}{\pi} (c\log c) t\colon t\geq 0\right\}.
\end{equation}
Combining~\eqref{eq:self_similar_alpha_1} and~\eqref{eq:brown_resnick_stable_Levy}, we obtain~\eqref{eq:brown_resnick_stable_near1_alpha1}.
\end{proof}

\subsection{Extremes of independent totally skewed \texorpdfstring{$\alpha$}{alpha}-stable Ornstein--Uhlenbeck processes}\label{subsec:extremes_OU}
In addition to their result~\eqref{eq:brown_resnick_res1} on extremes of i.i.d.\ Brownian motions, \citet{bro1977} proved a similar result for Ornstein--Uhlenbeck processes. Let $Y_1,Y_2,\ldots$ be i.i.d.\ copies of the stationary Gaussian Ornstein--Uhlenbeck process
$$
Z(t):=\eee^{-t/2} B(\eee^{t}), \quad t\in\R.
$$
Then, with $u_n$ satisfying $1-\Phi(u_n)\sim 1/n$,  \citet{bro1977} proved that
\begin{equation}\label{eq:brown_resnick_OU}
\left\{\sqrt{2\log n} \left(\max_{i=1,\ldots,n} Z_i \left(1 + \frac t{2\log n}\right) - u_n\right) \colon t\in\R\right\}
\toweak
\left\{\eta_{\text{BR}}(t)\colon t\in\R\right\}
\end{equation}
weakly on $C(\R)$.
We now establish a generalization of this result in the totally skewed $\alpha$-stable case. As in the previous section, let $\{\xi_\alpha(t)\colon t\geq 0\}$ be a L\'evy process with $\xi_\alpha(t)\sim S_{\alpha}(t,-1,0)$, where $\alpha\in (0,2]$.
The associated Ornstein--Uhlenbeck process $\{Z_\alpha(t)\colon t\in\R\}$ is defined by
\begin{equation}\label{eq:Y_alpha_def}
  Z_\alpha(t) =
  \begin{cases}
  \eee^{-t/\alpha} \xi_\alpha(\eee^t), &\alpha\neq 1,\\
  \eee^{-t}\xi_1(\eee^t) + \frac{2}{\pi} t, &\alpha=1.
  \end{cases}
\end{equation}
The self-similarity of $\xi_\alpha$ (or~\eqref{eq:self_similar_alpha_1} in the case $\alpha=1$) implies that the process $Z_\alpha$ is stationary with $S_\alpha(1,-1,0)$ margins. Let $Z_{1,\alpha},Z_{2,\alpha},\ldots$ be i.i.d.\ copies of $Z_\alpha$.
\begin{theo}\label{theo:BR_for_alpha_stable_OU}
For $\alpha\neq 1$, we have the following weak convergence of stochastic processes on the Skorokhod space $D(\R)$:
\begin{equation}\label{eq:brown_resnick_OU_stable_alpha_neq 1}
\left\{ (\log n)^{\frac 1 \alpha} \max_{i = 1,\ldots,n} Z_{i,\alpha}\left(\frac{t}{\log n}\right) -b_{n,\alpha}\colon t\in\R\right\}
\toweak \left\{\frac{1}{\theta_\alpha}\eta_{\alpha}(t)\colon t\in \R\right\},
\end{equation}
where $\eta_{\alpha}$ is a L\'evy--Brown--Resnick process defined as in Section~\ref{subsec:levy_brown_resnick} with $L^+$ as in~\eqref{eq:L^+_alpha}.
For $\alpha=1$ the result takes the form
\begin{equation}\label{conv_max_W}
\left\{ (\log n) \max_{i = 1,\ldots,n} Z_{i,1}\left(\frac{t}{\log n}\right) - \tilde b_{n,1}(t)\colon t\in\R\right\}
\toweak \left\{\frac{2}{\pi}\eta_{1}(t)\colon t\in \R\right\}.
\end{equation}
\end{theo}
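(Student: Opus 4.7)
The plan is to reduce Theorem~\ref{theo:BR_for_alpha_stable_OU} to Theorem~\ref{theo:BR_for_alpha_stable_levy} via the defining identity of the Ornstein--Uhlenbeck process. The key observation is that when the argument of $Z_\alpha$ is $t/\log n$, the associated argument of $\xi_\alpha$ inside the representation is $\eee^{t/\log n}$, which differs from $1+t/\log n$ only by a smooth deterministic time change converging to the identity uniformly on compact sets.

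Step 1 (algebraic rewriting). For $\alpha\ne 1$, substitute $Z_{i,\alpha}(s) = \eee^{-s/\alpha}\xi_{i,\alpha}(\eee^{s})$ with $s=t/\log n$ to get
\begin{equation*}
(\log n)^{1/\alpha}\max_i Z_{i,\alpha}(t/\log n) - b_{n,\alpha}
= \eee^{-t/(\alpha\log n)}\Bigl[(\log n)^{1/\alpha}\max_i \xi_{i,\alpha}\bigl(\eee^{t/\log n}\bigr) - b_{n,\alpha}\Bigr] + \bigl(\eee^{-t/(\alpha\log n)}-1\bigr)\,b_{n,\alpha}.
\end{equation*}

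Step 2 (deterministic time change). Define $\phi_n(t):=\log n\,(\eee^{t/\log n}-1)$, so that $\eee^{t/\log n}=1+\phi_n(t)/\log n$ and $\phi_n$ is smooth and strictly increasing with $\phi_n\to\mathrm{id}$ uniformly on compacts. Setting $F_n(s):=(\log n)^{1/\alpha}\max_i\xi_{i,\alpha}(1+s/\log n)-b_{n,\alpha}$, Theorem~\ref{theo:BR_for_alpha_stable_levy} gives $F_n\toweak\tilde\eta_\alpha$ on $D(\R)$. Since composition with a deterministic continuous time change converging uniformly on compacts to the identity is continuous on $D(\R)$, I obtain $F_n\circ\phi_n\toweak\tilde\eta_\alpha$, i.e.\ the bracketed quantity in Step~1 converges weakly to $\tilde\eta_\alpha(t)$.

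Step 3 (drift from the prefactor). Using the explicit form of $b_{n,\alpha}$ in~\eqref{eq:b_n_alpha_def} one checks that $b_{n,\alpha}/\log n\to \alpha/[\theta_\alpha(\alpha-1)]$, and the identity $\psi(\theta_\alpha)=c_\alpha\theta_\alpha^\alpha=1/(\alpha-1)$ yields $\psi(\theta_\alpha)/\theta_\alpha=1/[\theta_\alpha(\alpha-1)]$. Hence
\begin{equation*}
\bigl(\eee^{-t/(\alpha\log n)}-1\bigr)b_{n,\alpha} = -\frac{t\,b_{n,\alpha}}{\alpha\log n}(1+o(1)) \;\longrightarrow\; -\frac{\psi(\theta_\alpha)}{\theta_\alpha}\,t
\end{equation*}
uniformly on compacts. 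Combining with $\eee^{-t/(\alpha\log n)}\to 1$ via Slutsky on $D(\R)$ gives
\begin{equation*}
(\log n)^{1/\alpha}\max_i Z_{i,\alpha}(t/\log n)-b_{n,\alpha}\toweak \tilde\eta_\alpha(t)-\frac{\psi(\theta_\alpha)}{\theta_\alpha}t=\frac{1}{\theta_\alpha}\eta_\alpha(t),
\end{equation*}
which is~\eqref{eq:brown_resnick_OU_stable_alpha_neq 1}.

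Step 4 (the case $\alpha=1$). The same scheme applies using~\eqref{eq:brown_resnick_stable_near1_alpha1}: rewrite $\log n \cdot Z_{i,1}(t/\log n)=\eee^{-t/\log n}\log n\cdot \xi_{i,1}(\eee^{t/\log n})+(2/\pi)t$ and apply the time change $\phi_n$ to the process on the right. The main obstacle lies here: the normalization $\tilde b_{n,1}$ is now $t$-dependent with coefficient of order $(\log n)(\log\log n)$, so one must verify that when the time change $\phi_n$ is inserted into $\tilde b_{n,1}$, the resulting quadratic-order perturbations combine with the Taylor expansion of the prefactor $\eee^{-t/\log n}$ and the extra OU drift $(2/\pi)t$ so that the diverging contributions cancel. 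Once this cancellation is tracked via second-order Taylor expansion of $\eee^{\pm t/\log n}$, the residual deterministic drift matches $\tilde\eta_1(t)-(2/\pi)\eta_1(t)$ and Slutsky completes the proof of~\eqref{conv_max_W}.
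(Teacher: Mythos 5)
Your argument for $\alpha\neq 1$ is correct and is essentially the paper's own proof: the paper performs the identical decomposition, transfers the convergence of Theorem~\ref{theo:BR_for_alpha_stable_levy} through a deterministic time change $\gamma_n$ with $\gamma_n\to\mathrm{id}$ uniformly on compacts (the inverse of your $\phi_n$, controlling the Skorokhod $J_1$-distance exactly as you do), and identifies the deterministic drift limit $-\psi(\theta_\alpha)t/\theta_\alpha$ from $b_{n,\alpha}(\eee^{-t/(\alpha\log n)}-1)$ in the same way. For $\alpha=1$ your Step~4 only asserts, without carrying out, the cancellation of the $(\log n)(\log\log n)$-order terms arising from the $t$-dependent centering $\tilde b_{n,1}$; this is no less detailed than the paper, whose entire treatment of that case is the sentence that the proof ``is similar'', so the proposal matches the paper's proof in both route and level of rigour.
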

Theorem~\ref{theo:BR_for_alpha_stable_OU} will be deduced from Theorem~\ref{theo:BR_for_alpha_stable_levy} using~\eqref{eq:Y_alpha_def}. The proof will be given in Section~\ref{subsec:proof_OU}. The study of the pointwise  maximum of many independent stochastic processes over an infinitesimal interval (see Theorems~\ref{theo:BR_for_alpha_stable_levy} and~\ref{theo:BR_for_alpha_stable_OU}) is closely related to the results, due to Albin~\cite{alb1993}, \cite{alb97}, \cite{alb1998}, on the maximum of a single  totally-skewed $\alpha$-stable process over a finite or increasing interval. The drifted process $L^+$, see~\eqref{eq:L^+_alpha}, appeared in the works of Albin as an extremal tangent process describing the behavior of a totally skewed $\alpha$-stable process after reaching a high level.

\section{Proofs: Stationarity results}\label{sec:proof_stationarity}
\subsection{Stationary systems of independent Markov processes}
Let $(E,d_E)$ be a Polish space with metric $d_E$ and Borel $\sigma$-algebra $\mathcal{E}$.
Let $\{P_t^+\colon t\geq 0\}$ and $\{P_t^-\colon t\geq 0\}$ be two Markov probability transition semigroups on $E$ which are in duality w.r.t.\ some locally finite measure $\mu$. This means that
\begin{align}\label{duality}
  \int_A P_t^+(x, B) \mu(\dd x) = \int_B P_t^-(y, A) \mu(\dd y), \quad \text{for all } A,B\in\mathcal E.
\end{align}
In particular, the measure $\mu$ is invariant w.r.t.\ both $P_t^+$ and $P_t^-$:
\begin{align*}
\int_E P_t^+(x, B) \mu(\dd x) = \mu(B) = \int_E P_t^-(x, B) \mu(\dd x), \quad \text{for all } B\in\mathcal E.
\end{align*}

Consider a system of particles located in $E$ and moving independently of each other according to the following rules. The positions of particles at time $0$ form a Poisson point process (PPP) $\pi_0:=\sum_{i} \delta_{U_i}$ with
intensity measure $\mu$. The motion of particles is described as follows. For each $i\in\N$ consider Markov processes $\{\xi_i^+(t)\colon t\geq 0\}$ and $\{\xi_i^-(t)\colon t\geq 0\}$ which both start at $U_i$ and have transition semigroups $P_t^+$ and $P_t^-$, respectively. We assume that the processes $\xi_i^+, \xi_i^-$, $i\in\N$, are conditionally independent given $\pi_0$. Then, the position of particle $i\in\N$ at time $t\in\R$ is given by the two-sided process
\begin{align*}
  \xi_i(t) =
  \begin{cases}
    \xi_i^+(t), & \quad t\geq 0,\\
    \xi_i^-(-t), & \quad t < 0.
  \end{cases}
\end{align*}
We assume that the sample paths of the Markov process $\{\xi_i^+(t)\colon t\geq 0\}$ are right-continuous with left limits (c\`adl\`ag), whereas the sample paths of $\{\xi_i^-(t)\colon t\geq 0\}$ are left-continuous with right limits. Then, the sample paths of $\xi_i$ are c\`adl\`ag.

The positions of the particles at time $t\in\R$ are given by the point process $\pi_t:=\sum_{i} \delta_{\xi_i(t)}$ (which is a PPP on $E$), whereas the complete evolution of the particle system can be encoded by the point process $\Pi=\sum_{i} \delta_{\xi_i}$ (which is a Poisson point process on $D(\R, E)$, the Skorokhod space of
c\`adl\`ag functions from $\R$ to $E$).
Denote by $T_t:D(\R, E) \to D(\R, E)$ the shift operators
given by $T_tf(s) = f(s-t)$ with $f\in D(\R, E)$ and $s, t\in \R$.

\begin{theo}\label{thm1}
With the notation from above, the Poisson point process $\Pi=\sum_{i\in\N} \delta_{\xi_i}$
is stationary, that is for any $t\in\R$,
  \begin{align*}
    \sum_{i} \delta_{\xi_i} \eqdistr \sum_{i} \delta_{T_t \xi_i}.
  \end{align*}
\end{theo}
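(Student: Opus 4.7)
The plan is to verify that the intensity measure $\mu_\Pi$ of $\Pi$ on $D(\R,E)$ is invariant under the shifts $T_t$, and to conclude by the uniqueness of a Poisson point process in law given its intensity. Since the cylinder sets $\{f\colon f(t_1)\in A_1,\ldots,f(t_n)\in A_n\}$ with $t_1<\cdots<t_n$ generate the Skorokhod $\sigma$-algebra, it suffices to check shift invariance on these sets.

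First, I would compute $\mu_\Pi$ on a generic cylinder. Pick times $t_1<\ldots<t_k\le 0< t_{k+1}<\ldots<t_n$. Conditioning on the initial position $U_i=u$ of a typical particle (with $\pi_0$ having intensity $\mu$) and using the conditional independence of the forward and backward motions $\xi_i^\pm$, together with the Markov property, one obtains
\begin{align*}
\mu_\Pi\{f\colon f(t_j)\in \dd x_j,\ j=1,\ldots,n\}
&=\int_E \mu(\dd u)\,
P^-_{-t_k}(u,\dd x_k)\,P^-_{t_k-t_{k-1}}(x_k,\dd x_{k-1})\cdots P^-_{t_2-t_1}(x_2,\dd x_1)\\
&\qquad\times P^+_{t_{k+1}}(u,\dd x_{k+1})\cdots P^+_{t_n-t_{n-1}}(x_{n-1},\dd x_n).
\end{align*}
The cases $t_1\ge 0$ and $t_n\le 0$ are handled analogously and are easier.

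Next, I would use the duality relation \eqref{duality}, written in kernel form as $\mu(\dd x)P^+_s(x,\dd y)=\mu(\dd y)P^-_s(y,\dd x)$, to push the base measure $\mu$ from $\dd u$ down to $\dd x_1$. Integrating out $u$ first via Chapman--Kolmogorov converts $P^-_{-t_k}(u,\dd x_k)P^+_{t_{k+1}}(u,\dd x_{k+1})$ (after a duality swap that turns the $P^-_{-t_k}$ factor into $\mu(\dd x_k)P^+_{-t_k}(x_k,\dd u)/\mu(\dd u)$) into $\mu(\dd x_k)P^+_{t_{k+1}-t_k}(x_k,\dd x_{k+1})$. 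Then iteratively applying $\mu(\dd x_j)P^-_{t_j-t_{j-1}}(x_j,\dd x_{j-1})=\mu(\dd x_{j-1})P^+_{t_j-t_{j-1}}(x_{j-1},\dd x_j)$ marches the $\mu$ factor all the way left, yielding the Kuznetsov form
\begin{equation*}
\mu_\Pi\{f\colon f(t_j)\in \dd x_j,\ j=1,\ldots,n\}
=\mu(\dd x_1)\,P^+_{t_2-t_1}(x_1,\dd x_2)\cdots P^+_{t_n-t_{n-1}}(x_{n-1},\dd x_n),
\end{equation*}
which depends on the $t_j$'s only through the increments $t_{j+1}-t_j$. This is manifestly invariant under every shift $T_s$.

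Finally, the pushforward $\mu_\Pi\circ T_s^{-1}$ therefore agrees with $\mu_\Pi$ on all cylinders, hence on the Skorokhod $\sigma$-algebra. Since a PPP is determined in distribution by its intensity measure, $T_s(\Pi)\eqd \Pi$ for all $s\in\R$. The main obstacle is purely bookkeeping: keeping track of signs of time differences and making sure that the duality identity is applied consistently in the mixed case $t_1<0<t_n$. The pure forward ($t_1\ge 0$) and pure backward ($t_n\le 0$) cases serve as sanity checks and already reveal the Kuznetsov structure via the invariance of $\mu$ under $P_t^\pm$.
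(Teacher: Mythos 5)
Your proposal is correct and follows essentially the same route as the paper's proof: write the intensity of $\Pi$ on cylinder sets with the backward kernels emanating from the initial point, repeatedly apply the duality relation~\eqref{duality} to march the measure $\mu$ down to $\mu(\dd x_1)$, integrate out the initial position via Chapman--Kolmogorov, and observe that the resulting Kuznetsov-type expression depends only on the time increments. The only addition is your explicit closing appeal to the fact that a PPP is determined in law by its intensity, which the paper leaves implicit.
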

\begin{proof}
At least in the one-sided case the result is well known, see~\cite{brown} and the references therein, but we give a short proof for completeness. Fix some times $t_1\leq \dots\leq t_m \leq 0 \leq t_{m+1} \leq \dots \leq t_{m+n}$.
The intensity measure of $\Pi$ is given by
\begin{align*}
\lefteqn{\mu_{\Pi}\{f\in D(\R,E)\colon f(t_1)\in\dd x_1,\ldots, f(t_{n+m})\in\dd x_{m+n}\}}\\
&=
\int_E \mu(\dd x)
P_{-t_m}^-(x, \dd x_{m}) \ldots P_{t_2-t_1}^-(x_2, \dd x_{1})P_{t_{m+1}}^+(x, \dd x_{m+1}) \ldots P_{t_{m+n}-t_{m+n-1}}^+(x_{m+n-1}, \dd x_{m+n}).
\end{align*}
Repeatedly using identity~\eqref{duality} to replace $\mu(\dd x) P^-_t(x,\dd y)$ by $P^+_t(y,\dd x)\mu(\dd y)$, we obtain
\begin{align*}
\lefteqn{\mu_{\Pi}\{f\in D(\R,E)\colon f(t_1)\in\dd x_1,\ldots, f(t_{n+m})\in\dd x_{m+n}\}}\\
&=
\int_E \mu(\dd x_1)
P_{t_2-t_1}^+(x_1, \dd x_{2}) \ldots  P_{-t_m}^+(x_m, \dd x) P_{t_{m+1}}^+(x, \dd x_{m+1}) \ldots P_{t_{m+n}-t_{m+n-1}}^+(x_{m+n-1}, \dd x_{m+n})\\
&=P_{t_2-t_1}^+(x_1, \dd x_{2}) \ldots  P_{t_{m+1}-t_m}^+(x_m, \dd x_{m+1}) \ldots P_{t_{m+n}-t_{m+n-1}}^+(x_{m+n-1}, \dd x_{m+n}),
\end{align*}
where in the last step we performed integration over $\dd x$ and used the formula
$$
\int_{E} P_{-t_m}^+(x_m, \dd x) P_{t_{m+1}}^+(x, \dd x_{m+1})=P^+_{t_{m+1} - t_m}(x_m, \dd x_{m+1}).
$$
Clearly, the resulting expression for $\mu_\Pi$ does not change if we increase all $t_i$'s by the same value.
\end{proof}

\subsection{Proof of Theorem~\ref{theo:inv_two_sided_PPP}}
In order to encode a motion of a particle which has random birth and death times, it is convenient to introduce an extended state space $E = \R^2$; see Figure~\ref{fig:particles}. For a point $(x,s)\in \R^2$, the first coordinate $x$ is the usual spatial position of the particle.  The second coordinate $s$ indicates whether the particle is not yet born
($s<0$, in which case $|s|$ is the time remaining to the birth), it is alive ($s=0$), or it has already been killed ($s>0$, in which case $s$ is the time elapsed after the killing event), respectively.

\begin{figure}[t]
  \begin{centering}
  \includegraphics[width=0.5\textwidth, height=0.5\textwidth]{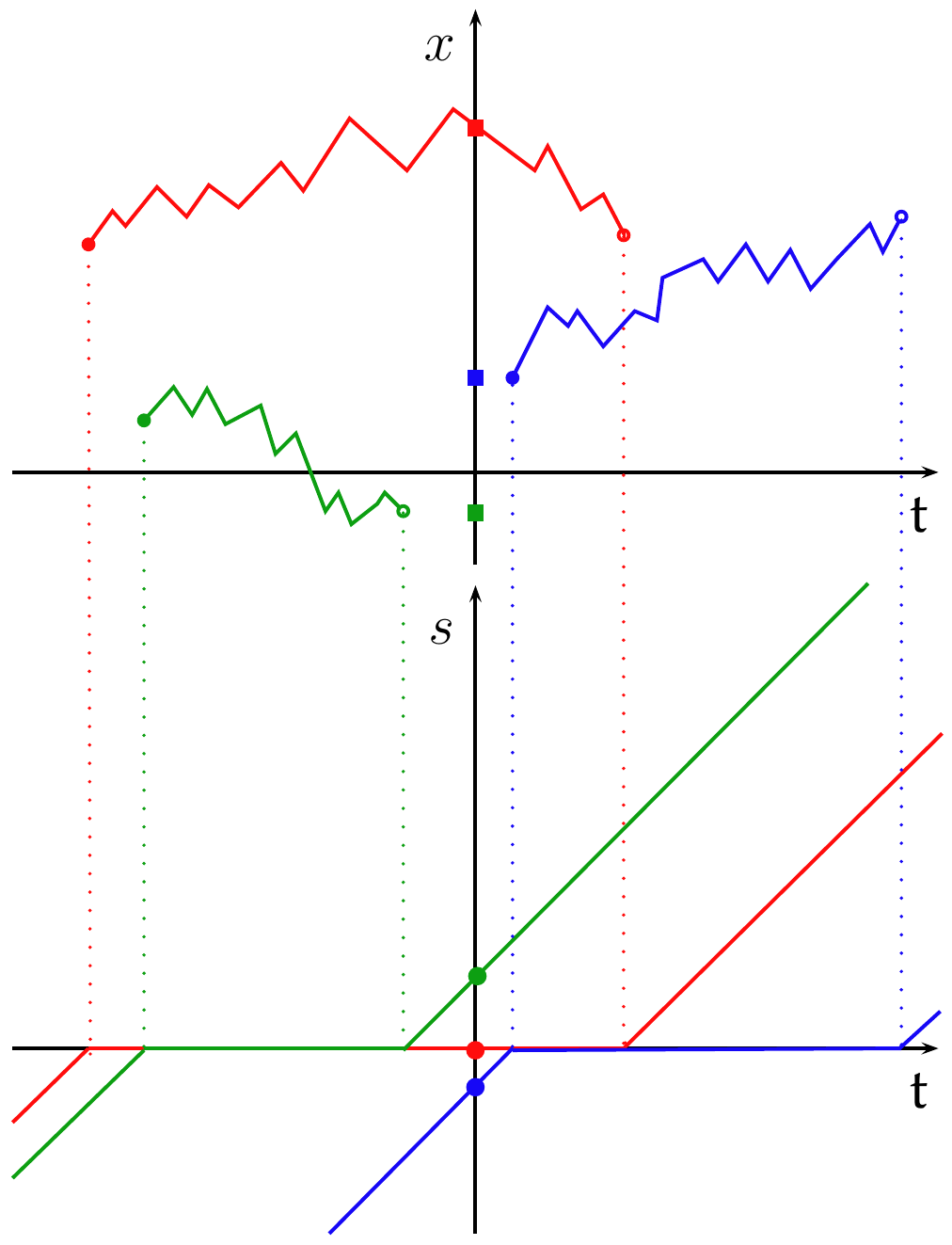}
   \caption{Visualization of three different realizations of
     the Markov process $Z^+$. The top figure shows the dependence of the spatial coordinate $x$ on time $t$. The bottom figure shows the dependence of $s$ on $t$. }\label{fig:particles}
   \end{centering}
\end{figure}

Consider a Markov process $Z^+$ on $\R^2$ which can be described as follows. Suppose that at time $0$ the process $Z^+$ starts at $(x_0,s_0)$ with $s_0<0$ (the particle is not yet born). Then, at any time $t\in (0, |s_0|)$ the particle is still not born meaning that $Z^+(t) = (x_0, s_0+t)$. At time $t=|s_0|$ the particle is born, and it appears on the real line at position $x_0$. After the birth, its coordinate $x$ changes according to the L\'evy process $L^+$, while the time coordinate $s$ remains equal $0$ (meaning that the particle is alive). After an exponential time $\tau^+ \sim \text{Exp}(\theta_-)$, the particle is killed while being located at some spatial position denoted by $x_1 = x_0+L^+(\tau^+)$. After the killing, the particle disappears. Formally, this means that its spatial coordinate $x=x_1$ remains constant, whereas the time coordinate $s$ increases at unit rate as the time goes on.
To summarize, if the process $Z^+$ starts at time $0$ at $(x_0,s_0)\in\R^2$ with $s_0<0$, then
$$
Z^+(t)
=
\begin{cases}
(x_0,s_0+t), &\text{if } s_0+t\leq 0, \\
(x_0+L^+(s_0+t),0), &\text{if } 0\leq s_0+t\leq \tau^+,\\
(x_0+L^+(\tau^+),s_0+t-\tau^+), &\text{if } \tau^+ \leq s_0+t.
\end{cases}
$$
The description of $Z^+$ in the cases $s_0=0$ and $s_0>0$ is similar. Let $P_t^+$, $t\geq 0$, be the probability transition kernel of the process $Z^+$.
\begin{theo}\label{theo:invar_measure}
The following $\sigma$-finite measure on $\R^2$  is invariant for the Markov process $Z^+$:
\begin{align}\label{intensity}
\nu(\dd x, \dd s)
=
\eee^{-x}\dd x \,
\left(
\ind_{s < 0} \theta_+ \dd s + \delta_0(s) + \ind_{s > 0} \theta_- \dd s \right),
\end{align}
where $\theta_+$ and $\theta_-$ satisfy \eqref{rates}.
\end{theo}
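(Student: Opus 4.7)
My approach is to decompose $\nu = \nu_- + \nu_0 + \nu_+$ according to the three strata $\{s<0\}$, $\{s=0\}$, $\{s>0\}$, where
\[\nu_- = \theta_+\ind_{s<0}\,\eee^{-x}\dd x\,\dd s,\quad \nu_0 = \delta_0(s)\,\eee^{-x}\dd x,\quad \nu_+ = \theta_-\ind_{s>0}\,\eee^{-x}\dd x\,\dd s,\]
and to verify the invariance identity $\int P_t^+ f\,\dd\nu = \int f\,\dd\nu$ separately for bounded measurable test functions $f$ supported in each stratum. For a starting state $(x_0,s_0)$, the value $Z^+(t)$ lies in exactly one of these strata depending on whether the particle is still unborn (if $s_0+t<0$, giving the deterministic shift $(x_0,s_0+t)$), alive (if $s_0+t\geq 0$ and the post-birth lifetime $\tau^+\sim\mathrm{Exp}(\theta_-)$ satisfies $\tau^+>s_0+t$, giving $(x_0+\xi(s_0+t),0)$), or already dead (if $\tau^+\leq s_0+t$, giving $(x_0+\xi(\tau^+),\,s_0+t-\tau^+)$).

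The two computational ingredients I shall use are: (i) the mapping theorem for PPPs, which yields $\int\eee^{-x}p_t(x,\dd y)\dd x = \eee^{\psi(1)t}\eee^{-y}\dd y$, hence by \eqref{eq:q_t^+_p_t} and \eqref{rates} the killed kernel satisfies $\int\eee^{-x}q_t^+(x,\dd y)\dd x = \eee^{-\theta_+ t}\eee^{-y}\dd y$; and (ii) the exponential killing law $\P[\tau^+>u] = \eee^{-\theta_- u}$.

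For $f$ supported in $\{s<0\}$, only $\nu_-$ contributes (the other strata can never reach negative $s$), and invariance reduces to translation invariance of $\dd s$ on $(-\infty,0)$. For $f$ supported at $\{s=0\}$, only $\nu_-$ and $\nu_0$ contribute; their respective contributions equal $(1-\eee^{-\theta_+ t})\int f(y,0)\eee^{-y}\dd y$ and $\eee^{-\theta_+ t}\int f(y,0)\eee^{-y}\dd y$, which sum to $\int f\,\dd\nu_0$. For $f$ supported in $\{s>0\}$, all three strata contribute: $\nu_+$ produces a translated tail on $(t,\infty)$ in the $s$-variable, while $\nu_-$ and $\nu_0$ produce death contributions which, after Fubini and the change of variables $r:=s_0+t-\tau$, yield densities proportional to $\theta_-(1-\eee^{-\theta_+(t-r)})$ and $\theta_-\eee^{-\theta_+(t-r)}$ on $r\in(0,t)$. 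These telescope to the constant $\theta_-$ and glue to the $\nu_+$ contribution to reconstruct $\int f\,\dd\nu_+$.

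The main obstacle is the bookkeeping in the third case: one must juggle three time variables (the pre-birth interval $|s_0|$, the lifetime $\tau$, and the post-death age $r$) and observe that the two apparently distinct contributions from $\nu_-$ and $\nu_0$ must be summed \emph{before} the $\eee^{-\theta_+(t-r)}$ factors cancel. Once this cancellation is identified, invariance of $\nu$ under $P_t^+$ follows from the pointwise equalities above and a standard monotone class / linearity argument.
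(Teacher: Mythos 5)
Your proposal is correct and follows essentially the same route as the paper's proof: a case analysis over the three strata $\{s<0\}$, $\{s=0\}$, $\{s>0\}$, using the identity $\int \eee^{-x} q_t^+(x,\dd y)\,\dd x = \eee^{-\theta_+ t}\eee^{-y}\dd y$ (a consequence of \eqref{eq:q_t^+_p_t} and \eqref{rates}) together with the exponential lifetime, and observing exactly the same cancellation of the factors $1-\eee^{-\theta_+(t-u)}$ and $\eee^{-\theta_+(t-u)}$ in the alive and dead cases. The only cosmetic difference is that you test against functions on each stratum while the paper tests against Borel sets, which amounts to the same monotone class argument.
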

\begin{proof}
In the sequel, we write $\mu(\dd x) = \eee^{-x}\dd x$, $x\in\R$.
Fix some time $t>0$. Let $B\subset \R^2$ be a Borel set. We need to verify that
$$
\int_E P^+_t( (x,s), B ) \nu(\dd x, \dd s) = \nu(B).
$$
\vspace*{2mm}
\noindent
\textsc{Case 1.} In the case $B\subset \R\times (-\infty, 0)$, we obtain
\begin{align*}
\int_E P^+_t( (x,s), B ) \nu(\dd x, \dd s)
=
\int_{B-(0,t)} \nu(\dd x, \dd s)
=
\nu(B).
\end{align*}

\vspace*{2mm}
\noindent
\textsc{Case 2.}
In the case when $B = B_0\times\{0\}$, where $B_0\subset \R$ is a Borel set, we obtain
  \begin{equation}\label{u_zero_1}
    \int_E P^+_t((x,s), B) \nu(\dd x, \dd s)
    =
    \int_{-t}^{0} \int_\R q_{t+s}^+(x,B_0)\theta_+ \mu(\dd x)\dd s + \int_\R  q_{t}^+(x,B_0) \mu(\dd x).
  \end{equation}
The second summand on the right-hand side of~\eqref{u_zero_1} equals
\begin{align}\label{non_stat}
\int_\R q_{t}^+(x,B_0) \mu(\dd x)
=
\eee^{-\theta_- t} \mu(B_0)  \E \eee^{L^+(t)}
=
\eee^{t(\psi(1)-\theta_-)}\mu(B_0)
=
\eee^{-\theta_+ t}\mu(B_0),
\end{align}
where we used \eqref{rates} for the last equality. With this observation, the first summand on the right-hand side of~\eqref{u_zero_1} equals
  \begin{align*}
    \mu(B_0) \int_{-t}^{0} \eee^{-\theta_+(t+s)} \theta_+ \dd s
    &= \mu(B_0) \left( 1 - \eee^{-\theta_+t} \right),
  \end{align*}
Thus, \eqref{u_zero_1} equals $\mu(B_0)=\nu(B)$.

\vspace*{2mm}
\noindent
\textsc{Case 3.}
Consider finally the case $B\subset \R\times (0,+\infty)$. Take some point $(y,u)\in B$. Let first $0<u<t$. We have
\begin{align*}
\lefteqn{\int_E P^+_t((x,s), (\dd y,\dd u)) \nu(\dd x, \dd s)}\\
&=
\int_{u-t}^{0} \theta_+ \dd s  \int_\R \mu(\dd x)  q_{t-u+s}^+(x,\dd y) \theta_-\dd u
+
\int_\R  \mu(\dd x) q_{t-u}^+(x,\dd y) \theta_- \dd u \\
&=
\theta_- \left( 1 - \eee^{-\theta_+(t-u)} \right) \mu(\dd y) \dd u
+
\theta_- \eee^{-\theta_+(t-u)} \mu(\dd y) \dd u\\
&=\theta_-  \mu(\dd y) \dd u.
\end{align*}
In the case $u>t$ the computation is the same as in Case~1.
\end{proof}
We are now going to define the dual of the process $Z^+$ w.r.t.\ the invariant measure $\nu$. Replacing in the definition of $Z^+$ the killing rate by $\theta_+$ and the driving process by $L^-$, and reversing the time direction, we obtain another Markov process on $\R^2$ denoted by $Z^-$. For example, if the process $Z^-$ starts at time $0$ at $(x_0,s_0)\in\R^2$ with $s_0 > 0$, then in the first stage the coordinate $s$ decreases linearly at unit rate to $0$, in the second stage the coordinate $s$ stays $0$, while the coordinate $x$ changes according to a L\'evy process $L^-$ until its killing after time $\tau^-\sim \text{Exp}(\theta_+)$, and finally in the third stage the coordinate $s$ decreases linearly at unit rate while the coordinate $x$ stays constant. More precisely, we have
$$
Z^-(t)
=
\begin{cases}
(x_0,s_0-t), &\text{if }  t - s_0\leq 0, \\
(x_0 + L^-(t-s_0),0), &\text{if } 0\leq t-s_0\leq \tau^-,\\
(x_0+L^-(\tau^-),\tau^- - t + s_0), &\text{if } \tau^- \leq t - s_0.
\end{cases}
$$
The probability transition kernel of $Z^-$ is denoted by $P^-$.
\begin{theo}\label{theo:duality}
The Markov processes $Z^+$  and $Z^-$ are in duality (in the sense of~\eqref{duality}) w.r.t.\ the invariant measure $\nu$.
\end{theo}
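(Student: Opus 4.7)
The plan is to verify the duality identity \eqref{duality} for the kernels $P^+_t$, $P^-_t$ with the measure $\nu$, which amounts to showing the equality of the two $\sigma$-finite measures
\begin{equation*}
\nu(\dd z_1)\, P^+_t(z_1, \dd z_2) \;=\; \nu(\dd z_2)\, P^-_t(z_2, \dd z_1)
\end{equation*}
on $\R^2\times\R^2$. The state space $\R^2$ splits naturally into three strata $L:=\R\times(-\infty,0)$, $A:=\R\times\{0\}$, $K:=\R\times(0,\infty)$. Because $Z^+$ only admits forward phase transitions $L\to A$ and $A\to K$, both sides of the displayed equation are supported on the six combinations $(L,L)$, $(L,A)$, $(L,K)$, $(A,A)$, $(A,K)$, $(K,K)$ of $(z_1,z_2)$-strata. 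I would verify the equality case by case on these six supports.

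The three ``pure'' cases are immediate. For $L\to L$ and $K\to K$, $Z^+$ translates $s$ deterministically at unit rate while keeping $x$ fixed, and $Z^-$ does the same in the reverse direction; both measures reduce to $\theta_{\pm}\eee^{-x_1}\dd x_1\,\dd s_1\,\delta_{x_1}(\dd x_2)\,\delta_{s_1\pm t}(\dd s_2)$ with matching range conditions on $s_1$. For $A\to A$ both sides factor as $\delta_0(\dd s_1)\,\delta_0(\dd s_2)$ times a measure on $\R^2$, and the identity reduces to $\mu(\dd x_1)\, q_t^+(x_1,\dd x_2)=\mu(\dd x_2)\, q_t^-(x_2,\dd x_1)$, which is exactly~\eqref{eq:q_+_q_-_dual}.

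The three mixed cases are the substantive work. Consider $L\to A$: starting from $(x_1,s_1)$ with $s_1\in(-t,0)$, $Z^+$ is born at time $|s_1|$ at position $x_1$ and then evolves under $L^+$ for the remaining duration $t+s_1$, yielding
$\nu(\dd z_1)P^+_t(z_1,\dd z_2)=\theta_+\,\eee^{-x_1}\dd x_1\,\dd s_1\cdot q_{t+s_1}^+(x_1,\dd x_2)\,\delta_0(\dd s_2)$. From the $Z^-$ side, starting at $(x_2,0)$ the process evolves under $L^-$ and is killed (at rate $\theta_+$) at time $\tau=t+s_1\in(0,t)$, after which $s$ decreases linearly to $s_1=\tau-t$; one gets $\nu(\dd z_2)P^-_t(z_2,\dd z_1)=\eee^{-x_2}\dd x_2\,\delta_0(\dd s_2)\cdot \theta_+\,q_{t+s_1}^-(x_2,\dd x_1)\,\dd s_1$. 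The two expressions coincide by~\eqref{eq:q_+_q_-_dual} applied at time $t+s_1$. The case $A\to K$ is entirely analogous after exchanging the roles of $(Z^+,\theta_-)$ and $(Z^-,\theta_+)$. The case $L\to K$ stacks both phenomena: one integrates over the birth time of $Z^+$ (equivalently, the second phase-transition time of $Z^-$), and matches the killing time of $Z^+$ with the birth time of $Z^-$; applying~\eqref{eq:q_+_q_-_dual} once to the middle $L^\pm$-segment again yields equality.

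The main obstacle is not conceptual but bookkeeping: one has to keep track of which exponential factors are already absorbed into the sub-probability kernels $q_t^{\pm}$ (namely $\eee^{-\theta_-t}$ inside $q^+_t$ and $\eee^{-\theta_+t}$ inside $q^-_t$, each matching the rate at which the respective process is killed in the $A$ stratum), and why the densities $\theta_+$, $\theta_-$ appearing in $\nu$ on the $L$- and $K$-strata are exactly what is needed to balance the killing densities of $L^-$, $L^+$ on the $A$-stratum. Once the correct parametrization of each transition time is chosen, every case reduces to~\eqref{eq:q_+_q_-_dual} together with the balance relation~\eqref{rates}, so no further obstacle is expected.
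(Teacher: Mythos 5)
Your proposal is correct and follows essentially the same route as the paper: both reduce the duality identity to a case analysis according to the signs of the age coordinates $s$ and $u$, with each case collapsing to the L\'evy duality relation~\eqref{eq:q_+_q_-_dual} (together with the bookkeeping of the killing factors and the balance relation~\eqref{rates}). If anything, you are more complete, since you enumerate all six stratum combinations while the paper treats only the case $s>0$, $u<0$ ``exemplarily''; the only nitpick is that in your $L\to K$ case no integration over the birth time is actually needed, as both phase-transition times are pinned by the endpoints.
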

\begin{proof}
We need to establish the equality
\begin{align}\label{mu_duality}
P_t^-((x,s), \dd(y,u)) \, \nu(\dd x, ds) = P_t^+((y,u), \dd(x,y))\, \nu(\dd y, \dd u),
\end{align}
for any $(x,s),(y,u)\in \R^2$.
With regard to the definitions of $Z^+$ and $Z^-$,
showing \eqref{mu_duality} breaks down to several cases depending
on the signs of $s$ and $u$. We exemplarily consider the case $s > 0$ and $u<0$. Let $t \geq s-u$ because otherwise the transition density is $0$. We have
\begin{align*}
    P_t^-((x,s), \dd(y,u))\, \nu(\dd x, \dd s)
    &= \theta_+ q_{t-s+u}^-(x,\dd y) \dd u \, \eee^{-x} \dd x \,\theta_- \dd s\\
    &= \theta_- q_{t-s+u}^+(y,\dd x) \dd u \, \eee^{-y} \dd y \, \theta_+ \dd s\\
    &= P_t^+((y,u), \dd(x,s))\, \nu(\dd y, \dd u).
  \end{align*}
The second equality uses the duality relation~\eqref{eq:q_+_q_-_dual}.
\end{proof}

\begin{proof}[Proof of Theorem~\ref{theo:inv_two_sided_PPP}]
Consider particles on $\R^2$ forming a Poisson point process on $\R^2$ with intensity $\nu$ defined in Theorem~\ref{theo:invar_measure}. Let the forward motion of the particles be given by the independent Markov processes $Z^+$, whereas the backward motion of particles be given by the independent Markov processes $Z^-$. By Theorem~\ref{theo:duality}, the  Markov processes $Z^+$ and $Z^-$ are in duality w.r.t.\ the measure $\nu$. By Theorem~\ref{thm1}, the resulting system of processes is stationary on $\R^2$. Discarding the coordinate $s$ (responsible for the ``age'' of the particles) and putting the
spatial coordinate $x$ to $-\infty$ whenever $s \neq 0$, we obtain the same particle system as described in Theorem~\ref{theo:inv_two_sided_PPP}.

By the stationarity of the particle system, the left-hand side of~\eqref{eq:intensity_kuznetsov} is shift-invariant, whereas the right-hand side is shift-invariant by definition. Hence, when proving~\eqref{eq:intensity_kuznetsov}, there is no restriction of generality in assuming that $t_1=0$. But in this case, the functions $\tilde V_i^+$ and $\tilde V_i^-$ make no contribution to the intensity of $\Pi$ on the left-hand side of~\eqref{eq:intensity_kuznetsov}. The contribution of the functions $V_i$ is given, by the transformation formula for the PPP, by the right-hand side of~\eqref{eq:intensity_kuznetsov}.

The stationarity of the process $\eta$ in~\eqref{eq:eta_with_killing_def} now follows immediately from the above. The max-stability condition~\eqref{eq:max_stable_def} can be verified by noting that the union of $n$ independent copies of the PPP $\pi_0$ (similarly, $\pi_+,\pi_-$) has the same intensity as the original process spatially shifted by $\log n$.
\end{proof}

\section{Proofs: General properties and extremal index}\label{sec:proof_general_properties}

\subsection{Proof of Proposition~\ref{prop_finite}}\label{subsec:proof_prop_finite}
We follow the idea used in the proof of Proposition 13 in~\cite{kab2009}. By stationarity, we can assume that $K=[0,T]$. Then, the paths $\tilde V_i^{-}$ make no contribution to the process $\eta$ on $K$. Fix some $l\in\N$.
For $k\in\N$ consider the random event
$$
A_{l,k} = \left\{\inf_{t\in [0,T]} \max_{i=1,\ldots,l} (U_i + L_i(t))> -k\right\}.
$$
Clearly, the event $A_l:=\cup_{k\in\N} A_{l,k}$ has probability at least $1-(1-\eee^{-T \theta_-})^l$ (which is the probability that at least one of the paths $U_i+L_i(t)$, $1\leq i\leq l$, will not be killed in $[0,T]$). On the event $A_{l,k}$, the set $J$ is contained in $I_k\cup \tilde I_k^+$, where
\begin{align*}
I_k &= \left\{i\in\N\colon U_i+ \sup_{t\in [0,T]} L_i(t) > -k\right\},
\\
\tilde I_k^+ &= \left\{i\in\N\colon \tilde U_i^+ +\sup_{t\in [0,T]} \tilde L_i^+(t) > -k, \tilde T_i^+ \in [0,T]\right\}.
\end{align*}
The cardinality of $I_k$ has Poisson distribution with parameter
\begin{equation}\label{eq:lambda_k}
\lambda_k := \int_{\R} \eee^{-u}\P\left[\sup_{t\in [0,T]} L(t) > u-k\right] \dd u
=
\eee^{k} \int_{\R} \eee^{v} \P\left[\sup_{t\in [0,T]} L(t) > v \right] \dd v.
\end{equation}
By an inequality of Willekens (see Equation 2.1 in~\cite{willekens}), we have the estimate
$$
\P\left[\sup_{t\in [0,T]} L(t) > v \right] \leq C \P[L(T)>v-1]
$$
for all $v>1$ and some finite constant $C$. Since $\E \eee^{L(T)} = 1$, the integral on the right-hand side of~\eqref{eq:lambda_k} converges and $\lambda_k$ is finite. It follows that the set $I_k$ is finite a.s.\ on the event $A_{l,k}$. Similarly, the set $\tilde I_k$ is finite a.s.\ on $A_{l,k}$. It follows that the set $J$ is finite a.s.\ on $A_l$. But we can make the probability of $A_l$ as close to $1$ as we wish by choosing appropriately large $l$.

\subsection{Proof of Proposition~\ref{prop:extremal_corr}}\label{subsec:proof_extremal_corr}
The event $\{\eta(0)<0, \eta(t)<0\}$ occurs if and only if the following $3$ independent events occur simultaneously:

\begin{enumerate}
\item there is no $i\in\N$ such that $U_i>0$;
\item there is no $i\in\N$ such that $U_i<0$ but $U_i + L_i^+(t)>0$;
\item there is no $i\in\N$ such that $s:=\tilde T_i^+ \in [0,t]$ and $\tilde U_i^+ +L^+_i(t-s)>0$.
\end{enumerate}
With $v:=t-s$ we have
\begin{align*}
\lefteqn{-\log \P[\eta(0)<0, \eta(t)<0]}\\
&=
1
+
\eee^{-\theta_- t} \int_{-\infty}^{0} \eee^{-u} \P[\xi(t)>-u] \dd u
+
\theta_+\int_0^t \eee^{-\theta_- v } \int_{-\infty}^{+\infty} \eee^{-u} \P[\xi(v) > -u] \dd u \dd v\\
&=
1
+
\eee^{-\theta_- t} \int_{0}^{\infty} \eee^{u} \P[\xi(t)>u] \dd u
+
\theta_+\int_0^t \eee^{-\theta_- v } \E \eee^{\xi(v)} \dd v
.
\end{align*}
Since $\E \eee^{\xi(v)} = \eee^{\psi(1) v}$ by~\eqref{psi} and $\psi(1) = \theta_--\theta_+$ by~\eqref{rates}, the third term is equal to $1-\eee^{-\theta_+ t}$. The desired formula~\eqref{eq:extremal_corr} follows easily.

\subsection{Proof of Theorem~\ref{theo:extremal_index}}\label{subsec:proof_extremal_index}
First, we compute $\Theta(T)$ as defined by~\eqref{eq:Theta_T_def}. Recall that $L^+$ is the process obtained by killing $\xi$ with rate $\theta_-$. Write $M(t):=\sup_{u\in [0,t]} L^+(u)$. Then, by the definition of L\'evy--Brown--Resnick processes given in Section~\ref{subsec:levy_brown_resnick_general}, we have
\begin{align*}
\lefteqn{-\log \P\left[\sup_{t\in[0,T]} \eta(t) \leq x\right]}\\
&=
\int_{-\infty}^{+\infty}\eee^{-u} \P[M(T)>x-u] \dd u + \theta_+ \int_0^T \int_{-\infty}^{+\infty} \eee^{-u} \P[M(s)>x-u]\dd u \dd s\\
&=
\eee^{-x} \left(\E \eee^{M(T)} + \theta_+\int_0^T \E \eee^{M(s)}\dd s\right).
\end{align*}
Note that the first integral is the contribution of particles which are present at time $0$, whereas the second integral is the contribution of particles born at $T-s$, where $s\in [0,T]$.
Writing $f(t)=\E \eee^{M(t)}$, we obtain that
\begin{equation}\label{eq:Theta_T_formula}
\Theta(T)=f(T)+\theta_+ \int_0^T f(s) \dd s.
\end{equation}
We determine the behavior of $\Theta(T)$ as $T\to\infty$.

\vspace*{2mm}
\noindent
\textsc{Case 1.} Let $\theta_+=0$. We will prove that
\begin{equation}\label{eq:asympt_f(T)_case1}
\Theta(T) = f(T)\sim \psi'(1) T \text{ as } T\to +\infty.
\end{equation}
Let $\tau(\lambda)\sim \text{Exp}(\lambda)$ be random variable which has an exponential distribution with parameter $\lambda>0$ and is independent of everything else. Then,
$$
\int_0^\infty f(T) \lambda \eee^{-\lambda T} \dd T = \E f(\tau(\lambda)) = \E \eee^{\sup_{s\in [0,\tau(\lambda)]} L^+(s)} = \E \eee^{\sup_{s\in [0,\tau(\lambda+\theta_-)]} \xi(s)}.
$$
Since $\xi$ is a L\'evy process with no positive jumps, Corollary~2 on page~190 of~\cite{ber1996} states that
$$
\sup_{s\in [0,\tau(\lambda+\theta_-)]} \xi(s) \sim \text{Exp}(\psi^{-1}(\lambda + \theta_-)).
$$
It follows that
\begin{equation}\label{eq:laplace_transf_f}
\int_0^\infty f(T) \eee^{-\lambda T} \dd T = \frac{\psi^{-1}(\lambda + \theta_-)}{\lambda (\psi^{-1}(\lambda + \theta_-)-1)}
\sim
\frac{\psi'(1)}{\lambda^2}
\text{ as } \lambda\downarrow 0,
\end{equation}
where in the last step we used that $\psi(1)=\theta_-$, see~\eqref{rates}, and hence, $\psi^{-1}(\theta_-)=1$. Since the function $f$ is non-decreasing, we can apply to~\eqref{eq:laplace_transf_f} the standard Tauberian theory, see Theorem 4 on page 423 in~\cite{feller_book},  to conclude that~\eqref{eq:asympt_f(T)_case1} holds. This proves the first case of~\eqref{eq:Theta}.

\vspace*{2mm}
\noindent
\textsc{Case 2.} Let $\theta_+>0$. We will prove that
\begin{equation}\label{eq:asympt_f(T)_case2}
C:=\lim_{T\to\infty} f(T) = \E \eee^{\sup_{s>0} L^+(s)} = \frac{\psi^{-1}(\theta_-)}{\psi^{-1}(\theta_-)-1}<\infty.
\end{equation}
The equality of the limit and the expectation follows from the monotone convergence theorem. We have  to compute the expectation. Since $L^+$ is obtained from $\xi$ by killing it with rate $\theta_-$ and since  $\xi$ is a L\'evy process with no positive jumps, we can again use Corollary~2 on page~190 of~\cite{ber1996} to obtain that
$$
\sup_{s>0} L^+(s) \sim \text{Exp} (\psi^{-1}(\theta_-)).
$$
Note that $\psi^{-1}(\theta_-)>1$ because $\psi(1)=\theta_--\theta_+ < \theta_-$ by~\eqref{rates}. Recalling the Laplace transform of the exponential distribution, we obtain~\eqref{eq:asympt_f(T)_case2}.
Together with~\eqref{eq:Theta_T_formula} this clearly implies that $\Theta(T)\sim C \theta_+ T$ as $T\to\infty$. The proof of the second case of~\eqref{eq:Theta} is complete.

\section{Proofs: Convergence results}\label{sec:proof_convergence}
\subsection{Proof of Theorem~\ref{theo:BR_for_levy}}\label{subsec:proo_brown_resnick_levy}
\textsc{Step 1.}
For $n\in\N$ define i.i.d.\ random variables $U_{1,n}, \ldots, U_{n,n}$ and i.i.d.\ stochastic processes $L_{1,n}, \ldots, L_{n,n}$  by
\begin{align}
U_{i,n}
&=
\theta (\xi_i(s_n) - b_n),\\
L_{i,n}(t)
&=
\theta (\xi_i(s_n+t)-\xi_i(s_n)) -\psi(\theta) t, \quad t\in\R.
\end{align}
If we restrict the $L_{i,n}$'s to the positive half-axis $t\geq 0$, then the $U_{i,n}$'s are independent of the $L_{i,n}$'s and the  $L_{i,n}$'s are i.i.d.\ copies of the process
$$
L^+(t)=\theta \xi(t) - \psi(\theta) t.
$$
On the other hand, let $\sum_{i=1}^{\infty} \delta_{U_i}$ be a PPP on $\R$ with intensity $\eee^{-u}\dd u$. Independently, let $L_1,L_2,\ldots$ be i.i.d.\ copies of $L$, a two-sided extension of the one-sided L\'evy process $L^+$; see~\eqref{twosided}.
We have to show that weakly on $D(\R)$,
$$
\left\{\eta_n(t):=\max_{i=1,\ldots,n} (U_{i,n} + L_{i,n}(t))\right\}_{t\in\R} \toweak \left\{\eta(t):=\max_{i=1,\ldots,n}(U_i+L_i(t))\right\}_{t\in\R}.
$$
It is known that weak convergence on $D(\R)$ is implied by the weak convergence on $D[-T,T]$ for every $T>0$; see~\cite[Theorem~16.7]{billingsley_book}.
Fix some $T>0$. We proceed as follows. In Steps~2--5 we will prove weak convergence on the space $D[0,T]$. The two-sided convergence on $D[-T,T]$ will be established in Step~6.

\vspace*{2mm}
\noindent
\textsc{Step 2.}
We prove that the point process $\sum_{i=1}^n \delta_{U_{i,n}}$ converges weakly to $\sum_{i=1}^{\infty} \delta_{U_i}$, as $n\to\infty$. The point processes are considered on the state space $(-\infty, +\infty]$.
By~\cite[Proposition 3.21]{res2008}, it suffices to show that  for every $u\in\R$,
\begin{equation}\label{eq:asympt0}
\lim_{n\to\infty} n \P[U_{1,n}>u] = \eee^{-u}.
\end{equation}
By the precise large deviations theorem of Bahadur--Rao--Petrov~\cite{petrov}, we have
\begin{equation}\label{eq:petrov}
\P[\xi(t)>\beta t] \sim \frac{1}{I'(\beta) \sqrt{2\pi \psi''(I'(\beta)) t}} \eee^{-I(\beta) t},
\quad t\to +\infty,
\end{equation}
uniformly in $\beta$ as long as it stays in a compact subinterval of $(\beta_0,\beta_\infty)$.
Let $\beta_n= (b_n+\frac u\theta)/s_n$, so that $\lim_{n\to\infty} \beta_n = \psi'(\theta)\in (\beta_0,\beta_\infty)$ by~\eqref{eq:b_n_def}. Note that $\lim_{n\to\infty} I'(\beta_n)=\theta$ because $I'$ is the inverse function of $\psi'$. It follows from~\eqref{eq:petrov} that
\begin{equation}\label{eq:asympt1}
\P[U_{1,n}>u]
=
\P[\xi(s_n) > \beta_n s_n]
\sim
\frac{1}{\theta \sqrt{2\pi \psi''(\theta) s_n}} \eee^{-I(\beta_n)s_n}
,\quad n\to\infty.
\end{equation}
Using the definitions of $\beta_n$ and $b_n$ (see~\eqref{eq:b_n_def}) together with Taylor's expansion, we obtain that
\begin{align*}
I(\beta_n) s_n
=
I\left(I^{-1} (\lambda_n) + \frac{u}{\theta s_n}\right) s_n
=
\log n - \log(\theta \sqrt{2\pi \psi''(\theta)s_n}) + u + o(1),
\end{align*}
where we used the fact that $I^{-1}(\lambda_n)$ converges to $I^{-1}(\lambda) = \psi'(\theta)$, see~\eqref{eq:b_n_def}, and that $I'(\psi'(\theta))=\theta$. Inserting this into~\eqref{eq:asympt1}, we obtain the required Equation~\eqref{eq:asympt0}. At this point note the following consequence of~\eqref{eq:asympt0}:
\begin{equation}\label{eq:U_i_n_to_Gumbel}
\max_{i=1,\ldots,n} U_{i,n} \todistr \eee^{-\eee^{-u}}.
\end{equation}

\vspace*{2mm}
\noindent
\textsc{Step 3.}
For a truncation parameter $a \in \N$ we define the truncated versions of the processes $\eta_n$ and $\eta$ by
\begin{align}\label{eq:M_n_trunc_eta_trunc}
\eta_{n}^{(a)}(t) = \max_{\substack{i = 1,\dots, n\\ U_{i,n} > -a}} (U_{i,n}+L_{i,n}(t)),
\quad
\eta^{(a)}(t) = \max_{\substack{i = 1,\dots, n\\ U_{i} > -a}} (U_i + L_i(t)).
\end{align}
We prove that for every fixed $a\in\N$, the process $\eta_n^{(a)}$ converges to $\eta^{(a)}$ weakly on $D[0,T]$. Consider a bounded, continuous function $f:D[0,T]\to \R$. We need to show that
\begin{equation}\label{eq:E_f_M_n_tau_to_E__f_eta_tau}
\lim_{n\to\infty} \E f(\eta_n^{(a)}) = \E f(\eta^{(a)}).
\end{equation}
Let $\mathcal M$ be the space of locally finite integer-valued measures on $\bar \R=(-\infty, +\infty]$. As usually, $\mathcal M$ is endowed with vague topology. Let $\mathcal M_{a}$ be the (open) set of all $\mu\in \mathcal M$ such that $\mu(\{+\infty\}) = \mu(\{-a\})=0$. Define a function $\bar f:\mathcal M_a\to \R$ by
$$
\bar f\left(\sum_{i} \delta_{u_i}\right) = \E f\left(\max_{i\colon u_i>-a} (u_i +L_i(\cdot))\right).
$$
Note that any measure $\mu\in\mathcal M_{a}$ has only finitely many atoms above $-a$. Using this, it is easy to check that the function $\bar f$ is well-defined and continuous on $\mathcal M_{a}$. On $\mathcal M\backslash \mathcal M_a$ we define $\bar f$ to be, say, $0$.  Observe that $\mathcal M_{a}$ has full probability w.r.t.\ the law of the PPP $\sum_{i=1}^{\infty} \delta_{U_i}$. By the continuous mapping theorem, see~\cite[Theorem~2.7]{billingsley_book}, it follows that
$$
\bar f\left(\sum_{i=1}^n \delta_{U_{i,n}}\right)
\todistr
\bar f\left(\sum_{i=1}^{\infty} \delta_{U_i}\right).
$$
It follows that we have the convergence of expectations of these uniformly bounded random variables:
$$
\E f(\eta_n^{(a)})
=
\E \bar f\left(\sum_{i=1}^n \delta_{U_{i,n}}\right)
\ton
\E \bar f\left(\sum_{i=1}^{\infty} \delta_{U_i}\right)
=
\E f(\eta^{(a)}).
$$
This completes the proof of~\eqref{eq:E_f_M_n_tau_to_E__f_eta_tau}.

\vspace*{2mm}
\noindent
\textsc{Step 4.} We prove that $\eta^{(a)}$ converges to $\eta$ weakly on $D[0,T]$, as $a\to +\infty$. It suffices to show that
$$
\lim_{a\to +\infty}\P[\exists t\in [0,T]\colon \eta(t) \neq \eta^{(a)}(t)] =0.
$$
But this follows directly from Proposition~\ref{prop_finite}.

\vspace*{2mm}
\noindent
\textsc{Step 5.} We prove that
$$
\lim_{a\to +\infty}\limsup_{n\to\infty} \P[\exists t\in [0,T]\colon \eta_n(t) \neq \eta_n^{(a)}(t)] = 0.
$$
Define a process $\{\delta_n^{(a)}\colon t\in [0,T]\}$ by
$$
\delta_{n}^{(a)}(t) = \max_{\substack{i = 1,\dots, n\\ U_{i,n} \leq  -a}} (U_{i,n}+L_{i,n}(t)).
$$
It suffices to prove that
\begin{align}
&\lim_{b\to +\infty}\limsup_{n\to\infty} \P\left[\inf_{t\in [0,T]} \eta_n(t) \leq -b\right]=0,\label{eq:aux_lim1}\\
&\lim_{a\to +\infty}\limsup_{n\to\infty} \P\left[\sup_{t\in [0,T]} \delta_n^{(a)}(t) \geq -b\right]=0 \text{ for all } b\in\N. \label{eq:aux_lim2}
\end{align}
\noindent
\textit{Proof of~\eqref{eq:aux_lim1}.}
Let $i_n\in \{1,\ldots,n\}$ be (for concreteness, the smallest) number such that $U_{i_n,n} = \max_{i=1,\ldots,n} U_{i,n}$. Then,
\begin{align*}
\P\left[\inf_{t\in [0,T]}  \eta_n(t) \leq - b\right]
&\leq
\P\left[\inf_{t\in [0,T]} (U_{i_n,n} + L_{i_n,n}(t))\leq -b\right]\\
&\leq
\P\left[U_{i_n,n} \leq -\frac b2\right] + \P\left[\inf_{t\in [0,T]} L(t) \leq -\frac b2\right].
\end{align*}
By~\eqref{eq:U_i_n_to_Gumbel}, the first term on the right-hand side converges, as $n\to\infty$, to $\exp(- \eee^{b/2})$,
which, in turn, converges to $0$ as $b\to +\infty$. The second term on the right-hand side does not depend on $n$ and converges to $0$ as $b\to +\infty$.

\vspace*{2mm}
\noindent
\textit{Proof of~\eqref{eq:aux_lim2}.} Let $S_n = \sup_{t\in [0,T]} L_{1,n}(t)$ and denote by $\mu$ the probability distribution of $S_n$ (which does not depend on $n$).  Note that $S_n$ and $U_{1,n}$ are independent. We have
$$
\P\left[\sup_{t\in [0,T]} \delta_n^{(a)}(t) \geq - b\right]
\leq
n \P\left[U_{1,n}\leq - a,  U_{1,n} + S_n \geq - b \right].
$$
In order to prove~\eqref{eq:aux_lim2} it suffices to show that for some $\eps>0$ and all $b\in\N$,
\begin{align}
&\lim_{n\to\infty}  n\P\left[S_n >  (1+\eps)\log n - b\right] = 0,\label{eq:aux_lim3}\\
& \lim_{a\to+\infty} \limsup_{n\to\infty} n \int_{a-b}^{(1+\eps)\log n - b} \P\left[U_{1,n}> -b - s\right] \mu(\dd s)=0. \label{eq:aux_lim4}
\end{align}
\noindent \textit{Proof of~\eqref{eq:aux_lim3}.} By a result of~\citet{willekens}, the following estimate is valid for all $u>1$:
$$
\P[S_n > u] = \P\left[\sup_{t\in [0,T]} L^+(t) > u\right] \leq C \P[L^+(T) > u-1].
$$
Using this estimate, the fact that $\E \eee^{L^+(T)}=1$ and the Markov inequality, we immediately obtain that~\eqref{eq:aux_lim3} holds for all $\eps>0$ and $b\in\N$.

\vspace*{2mm}
\noindent \textit{Proof of~\eqref{eq:aux_lim4}.} We have
$$
\P\left[U_{1,n} > -b - s\right]
=
\P\left[\xi(s_n) > b_n- \frac 1\theta\left(b + s\right)\right]
=
\P\left[\xi(s_n) > s_n \beta_n (s) \right]
$$
with
$$
\beta_n(s) = \frac{b_n}{s_n}  - \frac{b + s}{\theta s_n}.
$$
Suppose that $s$ stays in the range between $a-b$ and $(1+\eps)\log n - b$. By~\eqref{eq:b_n_def} and~\eqref{growth}, for every $\delta>0$ we have, for sufficiently large $n$,
$$
\beta_n(s)
\geq
\psi'(\theta) - \frac{(1+\eps)\lambda}{\theta} -\delta
=
\frac{\psi(\theta)}{\theta} - \frac{\eps \lambda}{\theta} -\delta,
\quad
\beta_n(s) \leq \psi'(\theta)+\delta.
$$
Since $\beta_0 < \frac{\psi(\theta)}{\theta} < \psi'(\theta) < \beta_\infty$ by convexity of $\psi$, we can take $\eps,\delta>0$ so small and $n$ so large that $\beta_n(s)$ stays in a compact subinterval of $(\beta_0,\beta_\infty)$. Then, we can use the uniformity in~\eqref{eq:petrov}. By convexity of $I$, we have
\begin{align*}
I(\beta_n(s)) s_n
&= I\left(I^{-1} (\lambda_n) -\frac{b + s}{\theta s_n}\right) s_n
\geq
\lambda_n s_n  - I'(I^{-1}(\lambda_n)) \frac{b + s}{\theta}\\
&\geq
\log n - \log(\theta \sqrt{2\pi \psi''(\theta)s_n}) - (b + s),
\end{align*}
where we used that $I'(I^{-1}(\lambda_n)) < I'(I^{-1}(\lambda)) = \theta$. Using the uniformity in~\eqref{eq:petrov} we obtain the estimate
$$
n\P\left[U_{1,n} > -b -s\right]
\leq
\frac{Cn}{\sqrt {s_n}} \eee^{-I(\beta_n(s)) s_n}
\leq
C \eee^{b + s}.
$$
It follows that
$$
\limsup_{n\to\infty} n \int_{a-b}^{(1+\eps)\log n - b} \P\left[U_{1,n}> -b - s\right] \mu(\dd s)
\leq
\int_{a-b}^{\infty}  C \eee^{b + s} \mu(\dd s) = C\eee^{b} \E [\eee^{S_n}\ind_{S_n>a-b}].
$$
Since the law of $S_n$ does not depend on $n$ and  $\E \eee^{S_n}<\infty$, we obtain that the right-hand side goes to $0$ as $a\to +\infty$. This completes the proof of~\eqref{eq:aux_lim4}.

\vspace*{2mm}
Taken together, the results of Steps 3, 4, 5 imply that $\eta_n$ converges to $\eta$  weakly on $D[0,T]$; see~\cite[Theorem~3.2 on p.~28]{billingsley_book}.

\vspace*{2mm}
\noindent
\textsc{Step 6.} Finally, we prove weak convergence on the two-sided space $D[-T,T]$. Consider a modified sequence $\tilde s_n = s_n-T$ which also satisfies~\eqref{growth}. The corresponding sequence $\tilde b_n$ is given by
\begin{equation}\label{eq:tilde_b_n}
\tilde b_n = I^{-1} \left( \frac{\log n - \log(\theta\sqrt{2\pi \psi''(\theta)\tilde s_n})}{\tilde s_n} \right) \tilde s_n = b_n - \frac{\psi(\theta)}{\theta} T +o(1),
\end{equation}
where we used the Taylor expansion. By Steps 1--5 we have, weakly on $D[0,2T]$,
$$
\left\{\max_{i=1,\ldots,n} \xi_i (\tilde s_n + \tilde t) - \tilde b_n\colon \tilde t\in [0,2T]\right\}
\toweak
\left\{\frac 1 \theta \eta(\tilde t) + \frac{\psi(\theta)}{\theta} \tilde t\colon \tilde t\in [0,2T]\right\}.
$$
Introducing the variable $t=\tilde  t-T$, we can rewrite this as follows: Weakly on $D[-T,T]$,
$$
\left\{\eta_n(t) +b_n - \tilde b_n\colon t\in [-T,T]\right\}
\toweak
\left\{\frac 1 \theta \eta(t+T) + \frac{\psi(\theta)}{\theta} (t+T)\colon t\in [-T,T]\right\}.
$$
Using~\eqref{eq:tilde_b_n} and the stationarity of $\eta$, we obtain the required weak convergence on $D[-T,T]$.

\subsection{Proof of Theorem~\ref{theo:BR_for_alpha_stable_OU}}\label{subsec:proof_OU}
Fix $T>0$. Let first $\alpha\neq 1$. Let $\xi_{1,\alpha}, \xi_{2,\alpha},\ldots$ be i.i.d.\ copies of the $\alpha$-stable L\'evy process $\xi_\alpha$.
Consider the process
\begin{equation}\label{eq:tilde_eta_n_alpha}
\tilde \eta_{n,\alpha} (t)
:=
 (\log n)^{\frac 1 \alpha} \max_{i=1,\ldots,n} \xi_{i,\alpha}(\eee^{t/\log n}) - b_{n,\alpha}.
\end{equation}
Let $\gamma_n(t)$ be a function such that
\begin{equation}\label{eq:gamma_n_t}
\eee^{\gamma_n(t)/\log n} = 1+ \frac{t}{\log n},
\quad
t\in\R.
\end{equation}
Solving this equation w.r.t.\ $\gamma_n(t)$ and using Taylor's expansion we obtain that
\begin{equation}\label{eq:gamma_n_t_minus_t}
\lim_{n\to\infty} (\gamma_n(t)-t)=0 \text{ uniformly in } t\in [-T,T].
\end{equation}
From~\eqref{eq:brown_resnick_stable_near1} (recall also the notation introduced in~\eqref{eq:tilde_eta_alpha}, \eqref{eq:tilde_eta_n_alpha}, \eqref{eq:gamma_n_t}) we know that weakly on $D[-T,T]$,
\begin{equation}
\{\tilde \eta_{n,\alpha}(\gamma_n(t))\colon t\in [-T,T]\} \toweak \{\tilde \eta_{\alpha}(t)\colon t\in[-T,T]\}.
\end{equation}
Since by~\eqref{eq:gamma_n_t_minus_t} the Skorokhod $J_1$-distance between $\tilde \eta_{n,\alpha}(\gamma_n(\cdot))$ and $\tilde \eta_{n,\alpha}(\cdot)$ goes to $0$ as $n\to\infty$, we also have
\begin{equation}\label{eq:aux2}
\{\tilde \eta_{n,\alpha}(t)\colon t\in[-T,T]\} \toweak \{\tilde \eta_{\alpha}(t)\colon t\in [-T,T]\}
\end{equation}
weakly on $D[-T,T]$. Recalling that $Z_{i,\alpha}(t)=\eee^{-t/\alpha} \xi_{i,\alpha}(\eee^t)$ are i.i.d.\ $\alpha$-stable Ornstein--Uhlenbeck processes, consider
\begin{equation}\label{eq:aux1}
(\log n)^{\frac 1 \alpha} \max_{i=1,\ldots,n} Z_{i,\alpha}\left(\frac t {\log n}\right) - b_{n,\alpha}
=
\eee^{-\frac{t}{\alpha\log n}} \tilde \eta_{n,\alpha}(t) + b_{n,\alpha}\left(\eee^{-\frac{t}{\alpha\log n}}-1\right).
\end{equation}
By~\eqref{eq:aux2}, the first term on the right-hand side of~\eqref{eq:aux1} converges to $\tilde \eta_{\alpha}(t)$ weakly on $D[-T,T]$, whereas the second term is deterministic and converges, uniformly on $[-T,T]$, to $-\psi(\theta_\alpha) t/\theta_\alpha$  by~\eqref{eq:b_n_alpha_def} and~\eqref{eq:theta_alpha}. It follows that the right-hand side of~\eqref{eq:aux1} converges to $\tilde \eta_{\alpha}(t)-\psi(\theta_\alpha) t/\theta_\alpha = \eta_{\alpha}(t)/\theta_\alpha$ weakly on $D[-T,T]$.

The proof in the case $\alpha=1$ is similar, but it is based on~\eqref{eq:brown_resnick_stable_near1_alpha1} and uses $\tilde b_{n,1}$ instead of $b_{n,\alpha}$.

\section*{Acknowledgement}
The authors are grateful to Martin Schlather for numerous discussions on the topic of the paper and to an unknown referee for useful comments.
We are further grateful to Steffen Dereich and Leif D\"oring from whom we learned about the connection with Kuznetsov measures~\cite{kuznetsov} and Mitro's construction~\cite{mitro}. A review of these topics can be found in their recent paper~\cite{dereich_doering}.
Financial support by the Swiss National Science Foundation Projects 200021-140633/1, 200021-140686 (first author) is gratefully acknowledged.

\bibliographystyle{plainnat}
\bibliography{Engelke}

\end{document}